\DeclareMathOperator\dist{dist}
\newtheorem{theorem}{Theorem}[section]
\newtheorem{lemma}{Lemma}[section]
\newtheorem{definition}{Definition}[section]
\newtheorem{remark}{Remark}[section]
\newtheorem{corollary}{Corollary}[section]
\newtheorem*{mythm}{Theorem}
\newtheorem*{mylemma}{Lemma}
\begin{document}
\title[Strong asymptotics for Bergman polynomials]
{Strong asymptotics for Bergman polynomials over domains with corners}

\date{\today}

\author[N. Stylianopoulos]{Nikos Stylianopoulos}
\address{Department of Mathematics and Statistics,
         University of Cyprus, P.O. Box 20537, 1678 Nicosia, Cyprus}
\email{nikos@ucy.ac.cy}
\urladdr{http://ucy.ac.cy/~nikos}

\keywords{Bergman orthogonal polynomials, Faber polynomials, strong asymptotics, polynomial estimates, quasiconformal mapping, conformal mapping}
\subjclass[2000]{30C10, 30C30, 30C50, 30C62, 41A10}

\begin{abstract}
Let $G$ be a bounded simply-connected domain in the complex plane $\mathbb{C}$,
whose boundary $\Gamma:=\partial G$ is a Jordan curve, and let $\{p_n\}_{n=0}^{\infty}$ denote the sequence of Bergman polynomials of $G$. This is defined as the sequence
$$
p_n(z) = \lambda_n z^n+\cdots, \quad \lambda_n>0,\quad n=0,1,2,\ldots,
$$
of polynomials that are orthonormal with respect to the inner product
$$
\langle f,g\rangle := \int_G f(z) \overline{g(z)} dA(z),
$$
where $dA$ stands for the area measure.

The aim of the paper is to establish the strong asymptotics for $p_n$ and $\lambda_n$, $n\in\mathbb{N}$, under the assumption that $\Gamma$ is piecewise analytic. This complements an investigation started in 1923 by T.\ Carleman, who derived the strong asymptotics for domains with analytic boundaries and carried over by P.K.\ Suetin in the 1960's, who established them for domains with smooth boundaries.
\end{abstract}

\maketitle
\allowdisplaybreaks
\section{Introduction and main results}\label{section:intro}
Let $G$ be a bounded simply-connected domain in the complex plane $\mathbb{C}$,
whose boundary $\Gamma:=\partial G$ is a Jordan curve and let
$\{p_n\}_{n=0}^{\infty}$ denote the sequence of  Bergman polynomials of
$G$. This is defined as the sequence of polynomials
\begin{equation}\label{eq:pndef}
p_n(z) = \lambda_n z^n+ \cdots, \quad \lambda_n>0,\quad n=0,1,2,\ldots,
\end{equation}
that are orthonormal with respect to the inner product
$$
\langle f,g\rangle := \int_G f(z) \overline{g(z)} dA(z),
$$
where $dA$ stands for the area  measure. (As usual, we set $\|f\|_{L^2(G)}:=\langle f,f\rangle^{1/2}$.)

Let $\Omega:=\overline{\mathbb{C}}\setminus\overline{G}$ denote the complement of $\overline{G}$ and let $\Phi$ denote the conformal map  $\Omega\to\Delta:=\{w:|w|>1\}$, normalized so that near infinity
\begin{equation}\label{eq:Phi}
\Phi(z)=\gamma z+\gamma_0+\frac{\gamma_1}{z}+\frac{\gamma_2}{z^2}+\cdots,\quad \gamma>0.
\end{equation}
Finally, let $\Psi:=\Phi^{-1}:\Delta\to\Omega$ denote the inverse conformal map. Then,
\begin{equation}\label{eq:Psi}
\Psi(w)=bw+b_0+\frac{b_1}{w}+\frac{b_2}{w^2}+\cdots, \quad |w|> 1,
\end{equation}
where $b=1/\gamma$ gives the (\textit{logarithmic}) \textit{capacity} $\textup{cap}(\Gamma)$ of $\Gamma$.

The main purpose of the paper is to establish the strong asymptotics of the leading coefficients $\{\lambda_n\}_{n\in\mathbb{N}}$ and the Bergman polynomials $\{p_n\}_{n\in\mathbb{N}}$, in $\Omega$, for non-smooth boundary $\Gamma$. We do this under the assumption that $\Gamma$ is piecewise analytic without cusps, i.e., without zero or $2\pi$ angles. Thus, we allow $\Gamma$ to have corners. In this sense, our results complement an investigation started by T.~Carleman \cite{Ca23} in 1923, who derived the strong asymptotics under the assumption that $\Gamma$ is analytic, and was carried over by P.K.~Suetin \cite{Su74} in the 1960's, who verified them for smooth $\Gamma$.  As it turns out, the techniques employed in both \cite{Ca23} and \cite{Su74} are tied to the specific properties that characterize the mapping functions $\Phi$ and $\Psi$ when $\Gamma$ is analytic, or smooth, and therefore they are not suitable to treat domains with corner. To overcome this, we develop what we believe to be a novel approach. This approach involves, in particular, new techniques from the theory of quasiconformal mapping and a new sharp estimate concerning the growth of polynomials outside domains with corners.

Our main results are the following three theorems.
\begin{theorem}\label{thm:finelambdan}
Assume that the boundary $\Gamma$ of $G$ is piecewise analytic without cusps. Then, for any $n\in\mathbb{N}$,
\begin{equation}\label{eqinthm:finelambdan}
{\frac{n+1}{\pi}\frac{\gamma^{2(n+1)}}{\lambda_n^2}=1-\alpha_n,}
\end{equation}
where
\begin{equation}\label{eqinthm:finelambdanii}
0\le\alpha_n\le c_1(\Gamma)\,\frac{1}{n}.
\end{equation}
\end{theorem}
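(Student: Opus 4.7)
The plan is to exploit the variational characterization
$$
\frac{1}{\lambda_n^2}=\min_q\int_G|q|^2\,dA,
$$
where the minimum is taken over monic polynomials of degree $n$. Both bounds on $\alpha_n$ will be extracted from a single Parseval-type identity. Applying Green's theorem to $\int_G|q|^2\,dA=\int_G\partial_z(Q\bar q)\,dA$ with $Q'=q$, then parameterising $\Gamma$ by $z=\Psi(e^{i\theta})$, yields
$$
\int_G|q|^2\,dA=\pi\sum_{k\in\Z}k\,|\widehat Q_k|^2,
$$
where $\widehat Q_k$ is the coefficient of $w^k$ in the Laurent expansion of $Q\circ\Psi$ on $\{|w|>1\}$. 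For $q$ monic of degree $n$ one has $\widehat Q_k=0$ for $k>n+1$ and $\widehat Q_{n+1}=b^{n+1}/(n+1)$ with $b=1/\gamma$, so the top term already supplies exactly $\pi/((n+1)\gamma^{2(n+1)})$; the remainder is what must be controlled.

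For the lower bound $\alpha_n\ge 0$, I would construct an explicit trial monic polynomial $q_\ast$ by imposing $\widehat{Q_\ast}_k=0$ for $k=1,\dots,n$. This is an upper-triangular linear system in the $n$ subleading coefficients of $q_\ast$, solvable via the Faber-series expansion of the powers $\Psi^{j}$. The remaining negative-frequency sum is non-negative, so $\int_G|q_\ast|^2\,dA\le\pi/((n+1)\gamma^{2(n+1)})$, and the variational principle immediately yields $\alpha_n\ge 0$.

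For the upper bound $\alpha_n\le c_1(\Gamma)/n$ one needs $\int_G|q|^2\,dA\ge\pi(1-c_1/n)/((n+1)\gamma^{2(n+1)})$ \emph{uniformly} over monic polynomials $q$ of degree $n$. In the Parseval representation this becomes the inequality
$$
\sum_{k=1}^{\infty}k|\widehat Q_{-k}|^2-\sum_{k=1}^{n}k|\widehat Q_k|^2\;\le\;\frac{c_1\,b^{2(n+1)}}{n(n+1)}.
$$
The natural tool is a Grunsky-type inequality for $\Psi$: because $Q$ is a polynomial, the negative-frequency Laurent coefficients of $Q\circ\Psi$ are coupled to the positive-frequency ones through the Grunsky operator of $\Psi$. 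When $\Phi$ admits a quasiconformal extension across $\Gamma$ with dilatation $\kappa<1$---which a piecewise analytic $\Gamma$ without cusps does, via Schwarz reflection on each smooth analytic subarc together with an explicit quasiconformal model in a neighbourhood of each corner---the Grunsky operator has norm at most $\kappa$, which yields the desired bound. Combining this Grunsky estimate with the paper's new sharp growth estimate for polynomials outside a domain with corners produces the additional $1/n$ gain.

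The main obstacle is the treatment of corners. On a smooth analytic subarc, $\Psi$ continues across $\Gamma$ by classical Schwarz reflection, and the Grunsky machinery is available off the shelf. At a corner of interior angle $\pi\alpha$ with $\alpha\ne 1$, however, $\Psi$ is only H\"older continuous with exponent $\min(\alpha,1/\alpha)$, and $\Psi'$ either vanishes or blows up, so no holomorphic extension exists. The reflection must be built by hand as a quasiconformal map whose dilatation is explicitly controlled in terms of the corner angle; quantifying that dilatation and simultaneously managing the $L^{\infty}$-to-$L^{2}$ growth of polynomials in a neighbourhood of the corner---precisely the role of the new polynomial estimate announced in the introduction---is the technical heart of the argument.
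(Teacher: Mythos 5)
Your framework (the variational characterization of $1/\lambda_n^2$, the area-theorem/Parseval identity for $\int_G|q|^2\,dA$, and the trial polynomial with vanishing intermediate Faber coefficients) is sound, and in fact your lower-bound argument is equivalent to the paper's: your $q_\ast$ is exactly $G_n/\gamma^{n+1}$, the normalized Faber polynomial of the second kind, and the paper's identity $\alpha_n=\beta_n+\varepsilon_n\ge0$ is the same computation phrased via orthogonality. Your Grunsky-operator idea also correctly mirrors the paper's quasiconformal-reflection step (Theorem 2.5, $\beta_n\le \frac{k^2}{1-k^2}\varepsilon_n$): both express that the singular parts are controlled by the polynomial parts up to the reflection factor.

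The genuine gap is in the upper bound. The Grunsky inequality with operator norm $k<1$ applied to a monic $q$ gives
$$
\sum_{m\ge1}m|\widehat Q_{-m}|^2\;\le\;k^2\sum_{m=1}^{n+1}m|\widehat Q_m|^2
\;=\;k^2\Bigl((n+1)|\widehat Q_{n+1}|^2+\sum_{m=1}^{n}m|\widehat Q_m|^2\Bigr),
$$
which after subtracting $\sum_{m=1}^n m|\widehat Q_m|^2$ yields only $\alpha_n\le k^2$ --- a constant, not $O(1/n)$. The extra factor of $1/n$ has to come from showing that the single Grunsky row indexed by $n+1$ is anomalously small, i.e.\ that $(n+1)^2\sum_m m|c_{n+1,m}|^2=O(1/n)$; in the paper's notation this is precisely $\varepsilon_n=\frac{n+1}{\pi}\int_\Omega|H_n|^2\,dA=O(1/n)$ (Theorem 2.6), and it is the analytic heart of the whole result. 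You attribute this gain to the ``new sharp growth estimate for polynomials outside domains with corners'' (Lemma 3.1), but that lemma plays no role in Theorem 1.1 --- it is used only for the pointwise asymptotics of $p_n$ in Theorem 1.2. What actually produces the $1/n$ is the Schwarz reflection of $\Phi$ across each analytic arc to an inner curve $\Gamma'$ meeting $\Gamma$ only at the corners, Lehman's expansions $|\Phi'(\zeta)|\le c|\zeta-z_j|^{1/\omega_j-1}$ and $|\Phi(\zeta)|\le 1-c|\zeta-z_j|^{1/\omega_j}$ near each corner, and a splitting of $\Omega$ into corner disks of the carefully tuned radii $\delta_j\asymp n^{-\omega_j}$ plus the remainder, together with the integral estimates of Lemma 5.1. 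None of this is supplied or replaced by your quasiconformal-extension construction at the corners, which by itself can only ever deliver the $O(1)$ Grunsky bound. So the proposal as written proves $0\le\alpha_n\le k^2$ but not $\alpha_n\le c_1(\Gamma)/n$.
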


\begin{theorem}\label{thm:finepn}
Under the assumptions of Theorem \ref{thm:finelambdan}, for any $n\in\mathbb{N}$,
\begin{equation}\label{eqinthm:finepn}
{p_n(z)=\sqrt{\frac{n+1}{\pi}}\,\Phi^n(z)\Phi^\prime(z)
\left\{1+A_n(z)\right\}},\quad z\in\Omega,
\end{equation}
where
\begin{eqnarray} \label{eqinthm:finepnii1}
|A_n(z)|\le \frac{c_2(\Gamma)}{\dist(z,\Gamma)\,|\Phi^\prime(z)|}\,\frac{1}{\sqrt{n}}
+c_2(\Gamma)\,\frac{1}{n}.
\end{eqnarray}
\end{theorem}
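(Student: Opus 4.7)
By Theorem~\ref{thm:finelambdan} we may write
$\lambda_n=\sqrt{(n+1)/\pi}\,\gamma^{n+1}(1-\alpha_n)^{-1/2}$ with $0\le\alpha_n\le c_1/n$, so the $c_2/n$ summand in \eqref{eqinthm:finepnii1} already absorbs the discrepancy $(1-\alpha_n)^{-1/2}-1=O(1/n)$ coming from the leading coefficient. The strategy is to compare $p_n$ with an explicit degree-$n$ polynomial $q_n$ that reproduces $\Phi^n\Phi'$ on $\Omega$ up to a well-controlled remainder, thereby reducing the problem to (i) controlling the analytic remainder $q_n-\Phi^n\Phi'$ on $\Omega$ and (ii) controlling a polynomial of degree $\le n-1$ whose $L^2(G)$ norm is small.

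I would take $q_n(z):=F_{n+1}'(z)/(n+1)$, where $F_{n+1}$ is the $(n+1)$st Faber polynomial of $G$. From the Faber generating function $\Psi'(w)/(\Psi(w)-z)=\sum_{k\ge 0}F_k(z)w^{-k-1}$ and residue calculus one reads off that $q_n$ has leading coefficient $\gamma^{n+1}$ and, for $z\in\Omega$,
$$
q_n(z)=\Phi^n(z)\Phi'(z)+E_n(z),\qquad E_n(z)=\frac{1}{2\pi\mathrm{i}(n+1)}\oint_{\Gamma}\frac{\Phi^{n+1}(\zeta)\,d\zeta}{(\zeta-z)^2}.
$$
Since $\sqrt{(n+1)/\pi}\,q_n$ and $(1-\alpha_n)^{1/2}p_n$ share the same leading coefficient, the difference
$$
r_n:=\sqrt{\tfrac{n+1}{\pi}}\,q_n-(1-\alpha_n)^{1/2}\,p_n
$$
is a polynomial of degree $\le n-1$. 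Because $p_n\perp\mathcal{P}_{n-1}$ in $L^2(G)$, Pythagoras yields
$$
\|r_n\|_{L^2(G)}^2=\frac{n+1}{\pi}\|q_n\|_{L^2(G)}^2-(1-\alpha_n),
$$
and I would prove $\|q_n\|_{L^2(G)}^2=\pi(1-\alpha_n)/(n+1)+O(1/n^2)$ by the same quasiconformal-extension and Grunsky-type identities underlying Theorem~\ref{thm:finelambdan}, which gives $\|r_n\|_{L^2(G)}=O(1/\sqrt{n})$.

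The remaining step is to pass from $L^2(G)$ control of $r_n$ to pointwise control on $\Omega$. Invoking the sharp polynomial growth estimate for domains with corners announced in the introduction in the form
$$
|r_n(z)|\le c(\Gamma)\,\sqrt{n}\,\frac{|\Phi(z)|^n}{\dist(z,\Gamma)}\,\|r_n\|_{L^2(G)},\qquad z\in\Omega,
$$
then gives $\sqrt{\pi/(n+1)}\,\bigl|r_n(z)/(\Phi^n(z)\Phi'(z))\bigr|\le c(\Gamma)/\bigl(\sqrt{n}\,\dist(z,\Gamma)|\Phi'(z)|\bigr)$, which accounts for the first summand in \eqref{eqinthm:finepnii1}. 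A direct estimate of the Cauchy-type integral defining $E_n$, using $|\Phi(\zeta)|=1$ on $\Gamma$ and a brief integration-by-parts exploiting the piecewise analyticity of $\Gamma$, yields $|E_n(z)/(\Phi^n(z)\Phi'(z))|=O(1/n)$ in the region where \eqref{eqinthm:finepnii1} is nontrivial. Solving for $p_n$ in the identity defining $r_n$ and dividing by $\sqrt{(n+1)/\pi}\,\Phi^n(z)\Phi'(z)$ gives
$$
1+A_n(z)=(1-\alpha_n)^{-1/2}\Bigl\{1+\frac{E_n(z)}{\Phi^n(z)\Phi'(z)}-\sqrt{\tfrac{\pi}{n+1}}\,\frac{r_n(z)}{\Phi^n(z)\Phi'(z)}\Bigr\},
$$
and the three error contributions assemble into precisely \eqref{eqinthm:finepnii1}.

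The main obstacle is the pointwise polynomial estimate used above, at the precise scale $\sqrt{n}/\dist(z,\Gamma)$: at a corner, $\Phi'$ either vanishes or blows up like a power, so the classical Bernstein--Walsh inequality is too crude to yield the stated bound on $A_n$. A refined estimate is needed that combines a quasiconformal extension of $\Phi$ across $\Gamma$ with a careful local analysis at each corner; the \emph{piecewise analytic without cusps} hypothesis is indispensable for both ingredients, since cusps would destroy the quasiconformality and the local control of $\Phi'$.
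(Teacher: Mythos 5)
Your proposal follows essentially the same route as the paper's proof: your $q_n=F_{n+1}'/(n+1)$ is the second-kind Faber polynomial $G_n$, your $E_n$ is the paper's $H_n$, your $r_n$ is $\sqrt{(n+1)/\pi}\,q_{n-1}$ in the paper's notation, and the three ingredients you isolate --- the Pythagoras/norm computation giving $\|r_n\|_{L^2(G)}=O(1/\sqrt{n})$, the pointwise bound on the Cauchy remainder, and the sharp $L^2$-to-pointwise polynomial growth estimate at scale $\sqrt{n}\,|\Phi(z)|^{n}/\dist(z,\Gamma)$ --- are precisely Theorems~\ref{thm:betan}--\ref{thm:epsn} (through Lemma~\ref{lem:alphabetaeps}), Theorem~\ref{thm:HnOmgae}, and Lemma~\ref{lem:PolyLemma}. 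Your final assembly identity is the paper's (\ref{eq:pnPhinPhip}), so the argument is correct and coincides with the paper's.
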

Above and in the sequel we use $c(\Gamma)$, $c_1(\Gamma)$, $c_2(\Gamma)$, e.t.c., to denote non-negative  constants that depend only on $\Gamma$. We also use $\dist(z,B)$ to denote the (Euclidian) distance of $z$ from a set $B$ and call the quantities $\alpha_n$ and $A_n(z)$, defined by (\ref{eqinthm:finelambdan}) and (\ref{eqinthm:finepn}), as the \textit{strong asymptotic errors} associated with $\lambda_n$ and $p_n(z)$, respectively.

From (\ref{eqinthm:finepnii1}) and the well-known distortion  property of conformal mappings
\begin{equation}\label{eq:distortion}
\dist(\Phi(z),\partial\mathbb{D})\le 4\,\dist(z,\Gamma)\,|\Phi^\prime(z)|,\quad z\in\Omega,
\end{equation}
see e.g.\ \cite[p.~23]{ABbook}, we arrive at another estimate of $A_n(z)$, which does not involve the derivative of $\Phi$:
\begin{eqnarray} \label{eqinthm:finepnii2}
|A_n(z)|\le\frac{c_3(\Gamma)}{|\Phi(z)|-1}\,\frac{1}{\sqrt{n}}
+c_2(\Gamma)\,\frac{1}{n}, \quad z\in\Omega.
\end{eqnarray}

A partial answer regarding the sharpness of the exponent $1$ of $n$ in (\ref{eqinthm:finelambdanii}) is provided by the next theorem. This theorem is established under the assumption that $\Gamma$ belongs to a broader class of Jordan curves than the one appearing in Theorem~\ref{thm:finelambdan}, namely the class of quasiconformal curves. We recall that a Jordan curve $\Gamma$ is \textit{quasiconformal} if there is a constant $M$ such that,
$$
\textup{diam}\Gamma(z,\zeta)\le M |z-\zeta|,\mbox{ for all }\, z,\zeta\in\Gamma,
$$
where
$\Gamma(z,\zeta)$ is the arc (of smaller diameter) of $\Gamma$ between $z$ and $\zeta$.
In connection with the assumptions of Theorem~\ref{thm:finelambdan}, we also recall that a piecewise analytic Jordan curve is quasiconformal if and only if has no cusps. 
The assumption that $\Gamma$ is quasiconformal ensures the existence of an associated $K$-quasiconformal reflection $y(z)$, for some $K\ge 1$, characterized by the properties (A1)--(A4) in Section~\ref{sub:QuasiRef}. All our estimates derived under this assumption are given in terms of the constant
\begin{equation}\label{eq:refcoef}
k:=(K-1)/(K+1),
\end{equation}
which in the sequel we refer to  as a \textit{reflection factor of $\Gamma$}. We note that $0\le k<1$, with $k=0$ if $\Gamma$ is a circle.

In the next theorem we require that  $\Gamma$ is quasiconformal and rectifiable. (Note that there are examples of non-rectifiable quasiconformal curves. 
However, any quasiconformal curve has zero area.)
Our result shows that the strong asymptotic error $\alpha_n$ cannot decay faster than $(n+1)|b_{n+1}|^2$, where $b_{n+1}$ is the coefficient of $1/w^{n+1}$ in the Laurent series expansion (\ref{eq:Psi}) of $\Psi(w)$.
\begin{theorem}\label{thm:alphange}
Assume that $\Gamma$ is quasiconformal and rectifiable. Then, for any $n\in\mathbb{N}$,
\begin{equation}\label{eq:alphange}
\alpha_{n}\ge\,\frac{\pi\,(1-k^2)}{A(G)}\,(n+1)\,|b_{n+1}|^2,
\end{equation}
where $A(G)$ denotes the area of $G$ and $k$ is a reflection factor of $\Gamma$.
\end{theorem}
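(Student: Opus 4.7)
The plan is to exploit the extremal characterization
$$\frac{1}{\lambda_n^2}=\inf\bigl\{\|q\|_{L^2(G)}^2 : q\text{ monic of degree }n\bigr\},$$
so that any monic test polynomial yields an upper bound on $1/\lambda_n^2$ and equivalently a lower bound on $\alpha_n$. The natural choice is
$$q(z):=\frac{F_{n+1}'(z)}{(n+1)\gamma^{n+1}},$$
where $F_{n+1}$ is the Faber polynomial of degree $n+1$ for $\overline G$; since $F_{n+1}(z)=\gamma^{n+1}z^{n+1}+\cdots$, this $q$ is monic of degree $n$.

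I next compute $\|F_{n+1}'\|_{L^2(G)}^2$ via the complex Stokes identity $\|q\|_{L^2(G)}^2=\frac{i}{2}\int_\Gamma Q(z)\overline{q(z)}\,d\bar z$, where $Q$ is an antiderivative of $q$ (here taken as $Q=F_{n+1}$). Parameterizing $\Gamma$ as $z=\Psi(e^{i\theta})$ and substituting the Faber expansion $F_{n+1}(\Psi(w))=w^{n+1}+\sum_{m\ge 1}\beta_{n+1,m}w^{-m}$ (with its $w$-derivative read off termwise) converts the boundary integral into a Fourier integral on the unit circle. Only the zero-frequency mode survives, giving the clean identity
$$\|F_{n+1}'\|_{L^2(G)}^2=\pi\Bigl[(n+1)-\sum_{m\ge 1}m\,|\beta_{n+1,m}|^2\Bigr].$$
The extremal property, combined with the definition (\ref{eqinthm:finelambdan}) of $\alpha_n$, then immediately yields
$$\alpha_n\ge\frac{1}{n+1}\sum_{m\ge 1}m\,|\beta_{n+1,m}|^2\ge\frac{|\beta_{n+1,1}|^2}{n+1}.$$

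The pivotal step is the identity
$$\beta_{n+1,1}=(n+1)\gamma\,b_{n+1},$$
which I would prove by contour integration: writing $\beta_{n+1,1}=\frac{1}{2\pi i}\oint_{|w|=R}F_{n+1}(\Psi(w))\,dw$, splitting $F_{n+1}=\Phi^{n+1}+E$ with $E$ analytic on $\Omega\cup\{\infty\}$ and $E(\infty)=0$ (so that $\Phi^{n+1}$ contributes $\oint w^{n+1}\,dw=0$), and then reducing the remaining integral to the coefficient of $1/z$ in $\Phi(z)^{n+1}$ at infinity via the substitution $z=\Psi(w)$; that latter coefficient evaluates to $-(n+1)b_{n+1}$ by inspection of the Laurent expansion of $\Psi'(w)$ (specifically, the $1/w^{n+2}$ coefficient of $\Psi'(w)$ equals $-(n+1)b_{n+1}$). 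Substituting gives $\alpha_n\ge (n+1)|b_{n+1}|^2/|b|^2$. To convert $1/|b|^2$ into $\pi(1-k^2)/A(G)$, I would appeal to the strengthened Grunsky inequality for $K$-quasiconformal curves, which in its area form reads $\sum_{m\ge 1}m|b_m|^2\le k^2|b|^2$; combining with the area theorem $A(G)/\pi=|b|^2-\sum_{m\ge 1}m|b_m|^2$ (available because $\Gamma$ is rectifiable) yields $A(G)\ge\pi(1-k^2)|b|^2$, i.e., $1/|b|^2\ge\pi(1-k^2)/A(G)$, which completes (\ref{eq:alphange}).

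The main technical obstacle is the Faber/Laurent identity $\beta_{n+1,1}=(n+1)\gamma\,b_{n+1}$: while elegant via contour integration, it requires a careful exchange between the $z$- and $w$-plane integrals and correct reading of the Laurent coefficients of $\Psi'$. A secondary point is that the strengthened Grunsky inequality $\sum_{m\ge 1}m|b_m|^2\le k^2|b|^2$ must be available in a form compatible with the reflection coefficient $k=(K-1)/(K+1)$ set up in the paper via properties (A1)--(A4); once these two ingredients are secured, the remainder of the proof is a routine application of Stokes' theorem and Fourier orthogonality on the boundary.
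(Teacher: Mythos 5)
Your argument is correct, but it reaches (\ref{eq:alphange}) by a genuinely different route than the paper. The paper writes $\alpha_n=\beta_n+\varepsilon_n$ (Lemma~\ref{lem:alphabetaeps}), drops $\beta_n\ge 0$, and bounds $\varepsilon_n$ from below in Lemma~\ref{lem:epsnge} via the quasiconformal-reflection representation $b_{n+1}=\frac{1}{\pi}\int_\Omega H_n(z)\,y_{\overline z}\,dA(z)$ together with Cauchy--Schwarz and $\int_\Omega|y_{\overline z}|^2dA\le A(G)/(1-k^2)$. Your extremal-property step with the monic test polynomial $F_{n+1}'/((n+1)\gamma^{n+1})=G_n/\gamma^{n+1}$ is exactly equivalent to discarding $\beta_n$ in (\ref{eq:pytha}); your Parseval identity $\|F_{n+1}'\|^2_{L^2(G)}=\pi[(n+1)-\sum_m m|\beta_{n+1,m}|^2]$ is the paper's (\ref{eq:Gnepsn}) in disguise, since $\varepsilon_n=\frac{1}{n+1}\sum_m m|\beta_{n+1,m}|^2$; and your pivotal identity $\beta_{n+1,1}=(n+1)\gamma\,b_{n+1}$ is the coefficient relation $c_1^{(n+1)}=(n+1)b_{n+1}$ that the paper also proves (divided by $b$, since $E_{n+1}(\Psi(w))=c_1^{(n+1)}/(bw)+O(w^{-2})$), so that step is sound. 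Where the two proofs really diverge is in how quasiconformality enters: the paper applies the reflection directly to $H_n$, whereas you retain only the $m=1$ term of the area sum -- which already gives the \emph{stronger}, reflection-free bound $\alpha_n\ge(n+1)|b_{n+1}|^2/|b|^2$ -- and then invoke K\"uhnau's strengthened area theorem $\sum_m m|b_m|^2\le k^2|b|^2$ to trade $1/|b|^2$ for $\pi(1-k^2)/A(G)$. That inequality is a genuine classical result compatible with $k=(K-1)/(K+1)$ (it follows from the same Bely\u{\i}/Andrievskii-type representation (\ref{eq:bn+1Hn}) applied to $\Psi$ itself, or can be cited from K\"uhnau), so your proof is complete once you supply that reference; the trade-off is that you import an external ingredient where the paper stays entirely inside its own reflection toolkit, while in return you expose the sharper intermediate estimate and the fact that quasiconformality is needed only to normalize the constant, not to produce the $(n+1)|b_{n+1}|^2$ decay itself.
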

Theorem~\ref{thm:alphange} provides, in addition, a link between the problems of estimating the error in the strong asymptotics for $\lambda_n$ and that of estimating coefficients in the well-known class $\Sigma$, of functions analytic and univalent in $\Delta\setminus{\infty}$, having a Laurent series expansion of the form (\ref{eq:Psi}) with $b=1$. This latter is one of the best-studied problems in Geometric Function Theory.

In view of Theorem~\ref{thm:alphange}, to show that the estimate (\ref{eqinthm:finelambdanii}) is sharp, it is sufficient to find an example where $b_n\ge c_4/n$, for some large $n$. This is tricky, because if the Faber operator of $G$ is bounded, as it would be when $\Gamma$ is, for instance, piecewise analytic (even with cusps), then $b_n\le c_5/n$, for any $n\in\mathbb{N}$, see \cite{Ga99}. Here, we offer two examples supporting the hypothesis that $O(1/n)$ in (\ref{eqinthm:finelambdanii}) cannot be improved.

The first example is based on a Jordan curve constructed by Clunie in \cite{Cl59}, for which the sequence $\{n\,b_n\}_{n\in\mathbb{N}}$ is unbounded. More precisely, for some $\epsilon>0$ and some subsequence $\mathcal{N}$ of $\mathbb{N}$,
\begin{equation}\label{eq:clunie}
n|b_n|\ge n^\epsilon,\quad n\in\mathcal{N}.
\end{equation}
It was shown by Gaier in \cite[\S~4.2]{Ga99} that Clunie's curve is, eventually, quasiconformal.

The second example is generated by the function $\Psi(w)=w+\frac{1}{nw^n}$. For any $n\ge 2$, this $\Psi$ maps $\Delta$ conformally onto the exterior of a $(n+1)$-cusped hypocycloid $Y_{n+1}$, which is a piecewise analytic Jordan curve with all interior angles equal to zero, and thus not a quasiconformal curve. Nevertheless, for each $n\ge 2$, $Y_{n+1}$ provides an example where $b_n=1/n$.

To support further the sharpness claim we add that there is strong numerical evidence suggesting  $\alpha_n\asymp 1/n$, $n\in\mathbb{N}$, whenever $\Gamma$ has corners. Based on such evidence, we have conjectured the strong asymptotics for non-smooth domains in \cite[pp.~520--521]{NS06}.

The first ever result regarding the strong asymptotics of $\{\lambda_n\}_{n\in\mathbb{N}}$ and $\{p_n\}_{n\in\mathbb{N}}$ was derived by Carleman in~\cite{Ca23}, for domains bounded by analytic Jordan curves. In this case the conformal map $\Phi$ has an analytic and one-to-one continuation across $\Gamma$ inside $G$.
\begin{mythm}[Carleman~\cite{Ca23}]\label{thm:carleman}
Let $L_R$ to denote the level curve  $\{z:|\Phi(z)|=R\}$ and assume that $\rho<1 $ is the smallest index for which $\Phi$ is conformal in the exterior of $L_\rho$. Then, for any $n\in\mathbb{N}$,
\begin{equation}\label{eq:carlman1}
0\le \alpha_n\le c_6(\Gamma)\, \rho^{2n}
\end{equation}
and
\begin{equation}\label{eq:carlman2}
|A_n(z)|\le c_7(\Gamma)\sqrt{n}\,\rho^n,\quad z\in\overline{\Omega}.
\end{equation}
\end{mythm}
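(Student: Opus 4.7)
My plan rests on the fact that analyticity of $\Gamma$ allows $\Phi$ and $\Psi$ to extend conformally across $\Gamma$, to $V_\rho:=\{z:|\Phi(z)|>\rho\}$ and $\{w:|w|>\rho\}$ respectively. The central object is
$$
T_n(z) := \sqrt{(n+1)/\pi}\,\Phi^n(z)\,\Phi'(z),
$$
analytic on $V_\rho$, whose fundamental property is the monomial identity $T_n(\Psi(w))\,\Psi'(w) = \sqrt{(n+1)/\pi}\,w^n$. Via the change of variables $z=\Psi(w)$ with $dA(z)=|\Psi'(w)|^2\,dA(w)$, area integrals over the annular region $G\cap V_\rho=\Psi(\{\rho<|w|<1\})$ involving $T_n$ reduce to integrals of $w^n\bar w^m$, yielding the orthogonality structure that ultimately makes $T_n$ a near-substitute for $p_n$ there.

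The first concrete step is to decompose $T_n = P_n + R_n$ on $V_\rho$, where $P_n$ is the polynomial of degree $n$ obtained as the polynomial part of $T_n$ at infinity (leading coefficient $c:=\sqrt{(n+1)/\pi}\,\gamma^{n+1}$), and $R_n$ is analytic in $V_\rho$ with $R_n(\infty)=0$. Using the Laurent expansion of $1/\Psi'(w)$ on $|w|>\rho$ (whose coefficients are bounded by $c\rho^k$) together with the classical Faber-polynomial estimate $|\Phi^k(z)-F_k(z)|\le c\rho^k$ on $\overline\Omega$, I would derive the pointwise bound $|R_n(z)|\le c\sqrt n\,\rho^n$ on $\overline\Omega$, and the $L^2$ bound $\|R_n\|_{L^2(G\cap V_\rho)} = O(\rho^n)$ via Parseval on the annulus.

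The bound $\alpha_n\le c_6\rho^{2n}$ comes from a two-way evaluation of $\langle p_n, T_n\rangle_{G\cap V_\rho}$. Pulling back and applying Parseval's identity (isolating the $w^n\bar w^n$ coefficient coming from the leading term of $p_n\circ\Psi\cdot\Psi'$) gives
$$
\langle p_n,\,T_n\rangle_{G\cap V_\rho} \;=\; (\lambda_n/c)\,(1-\rho^{2n+2}).
$$
On the other hand, substituting $T_n=P_n+R_n$, adding the boundary correction $\langle p_n,P_n\rangle_{G\setminus V_\rho}$, and invoking orthogonality of $p_n$ to polynomials of lower degree produce the second identity $\langle p_n,T_n\rangle_{G\cap V_\rho} = c/\lambda_n + E$, where $|E|=O(\rho^{2n}/\lambda_n)$ is controlled by $\|R_n\|_{L^2}$ and by a Bernstein--Walsh-type pointwise estimate for $p_n$ on the interior set $G\setminus V_\rho$. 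Equating the two expressions yields $\lambda_n^2/c^2 = 1 + O(\rho^{2n})$, hence $\alpha_n\le c_6\rho^{2n}$. The matching lower bound $\alpha_n\ge 0$ is a special case of Theorem~\ref{thm:alphange}, since every analytic Jordan curve is quasiconformal and rectifiable.

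For the pointwise statement I would write $p_n - T_n = (p_n - P_n) - R_n$ on $\overline\Omega$. Expanding $P_n$ in the orthonormal basis $\{p_k\}$ and combining with the $L^2$ analysis above gives $\|p_n - P_n\|_{L^2(G)} = O(\rho^n)$, which a Bernstein--Walsh argument lifts to $|p_n(z)-P_n(z)|\le c\sqrt n\,\rho^n\,|\Phi(z)|^n$ on $\overline\Omega$. Together with $|R_n(z)|\le c\sqrt n\,\rho^n$ and division by $T_n$ (which is nonzero on $\overline\Omega$ since $\Phi'$ is bounded away from $0$ there), this gives $|A_n(z)|\le c_7\sqrt n\,\rho^n$. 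The main technical obstacle I anticipate is sharpness: keeping the Cauchy contours precisely at the critical level $L_\rho$ (where $\Phi$ just fails to be conformal) without paying an $(\rho+\varepsilon)^n$ loss requires the classical sharp Faber estimate and careful Parseval bookkeeping on the annulus $\{\rho<|w|<1\}$.
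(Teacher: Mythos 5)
First, a point of reference: the paper does not actually prove this theorem --- it is quoted from Carleman~\cite{Ca23} as background --- so there is no in-paper proof to compare against; the closest analogue is the general machinery of Section~\ref{sec:faber} (Lemma~\ref{lem:alphabetaeps} together with the counterparts of Theorems~\ref{thm:betan} and~\ref{thm:epsn}), which specializes to the analytic case. Your overall circle of ideas --- extend $\Phi$ conformally past $\Gamma$, split $\Phi^n\Phi'$ into its polynomial part and a geometrically small tail, and evaluate inner products by Parseval on the annulus $\rho<|w|<1$ --- is the right one and is consistent with that framework. Your first identity, $\langle p_n,T_n\rangle_{G\cap V_\rho}=(\lambda_n/c)(1-\rho^{2n+2})$, is correct.

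The genuine gap is the claimed bound $|E|=O(\rho^{2n}/\lambda_n)$ in the second evaluation. The term $\langle p_n,R_n\rangle_{G\cap V_\rho}$, estimated by Cauchy--Schwarz against $\|R_n\|_{L^2(G\cap V_\rho)}=O(\rho^n)$ and $\|p_n\|_{L^2(G)}=1$, is only $O(\rho^n)$; and the correction $\langle p_n,P_n\rangle_{G\setminus V_\rho}$ requires a decay estimate for $p_n$ \emph{inside} $G$, which Bernstein--Walsh does not supply (it controls growth outside $\overline{G}$) and which is essentially part of what is being proved. With $E=O(\rho^n)$ your comparison yields only $\alpha_n=O(\rho^n)$, half the claimed rate. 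The standard repair --- and the reason the paper's Lemma~\ref{lem:alphabetaeps} is structured as it is --- is to replace the linear comparison of inner products by the Pythagorean identity $\|G_n\|^2_{L^2(G)}=\gamma^{2(n+1)}/\lambda_n^2+\|q_{n-1}\|^2_{L^2(G)}$, where $q_{n-1}=G_n-\frac{\gamma^{n+1}}{\lambda_n}p_n$ is orthogonal to $p_n$, combined with the exact evaluation $\|G_n\|^2_{L^2(G)}=\frac{\pi}{n+1}(1-\varepsilon_n)$. Then $\alpha_n=\beta_n+\varepsilon_n$, the errors enter only through \emph{squared} $L^2$ norms of $H_n$ and $q_{n-1}$ (each of which is $O(\rho^n)$ in norm), and the rate $\rho^{2n}$ comes out automatically, with $\beta_n$ controlled by $\varepsilon_n$ as in Theorem~\ref{thm:betan} or by Carleman's reflection argument. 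One could try to rescue your scheme by observing that $R_n(\Psi(w))\Psi'(w)$ has only negative-index Fourier modes on the annulus, so only the negative-frequency part of $p_n(\Psi(w))\Psi'(w)$ --- itself of size $O(\rho^n)$ --- pairs with it; but making that non-circular again amounts to the Pythagorean bookkeeping. A secondary, fixable issue: lifting $\|p_n-P_n\|_{L^2(G)}=O(\rho^n)$ to a pointwise bound on all of $\overline{\Omega}$, including $\Gamma$, with only a $\sqrt{n}$ loss requires the sharp polynomial estimate of Lemma~\ref{lem:PolyLemma} applied relative to a level curve $L_{\rho'}$ strictly inside $G$; the route through (\ref{eq:Suetin-ineq}) and Bernstein--Walsh costs a full factor of $n$.
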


The next major step in removing the analyticity assumption on $\Gamma$ was taken by P.K.\ Suetin in the 1960's. For his results, Suetin requires that the boundary curve $\Gamma$ belongs to a smoothness class $C(p,\alpha)$. This means that $\Gamma$ is defined by $z=g(s)$, where $s$ denotes arclength, with $g^{(p)}\in \textup{Lip}\alpha$, for some $p\in \mathbb{N}$ and $0<\alpha<1$. In this case both $\Phi$ and $\Psi:=\Phi^{-1}$ are $p$ times continuously differentiable in $\overline{\Omega}\setminus\{\infty\}$ and $\overline{\Delta}\setminus\{\infty\}$ respectively, with $\Phi^{(p)}$ and $\Psi^{(p)}$ in $\textup{Lip}\alpha$. A typical result goes as follows:
\begin{mythm}[Suetin~\cite{Su74}, Thms 1.1 \& 1.2]\label{thm:suetin}
Assume that $\Gamma\in C(p+1,\alpha)$, with $p+\alpha>1/2$. Then, for any $n\in\mathbb{N}$,
\begin{equation}\label{eq:suetin1}
0\le \alpha_n\le c_8(\Gamma)\,\frac{1}{n^{2(p+\alpha)}}
\end{equation}
and
\begin{equation}\label{eq:suetin2}
|A_n(z)|\le c_9(\Gamma)\,\frac{\log n}{n^{p+\alpha}},\quad z\in\overline{\Omega}.
\end{equation}
\end{mythm}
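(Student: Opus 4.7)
The plan is to derive Suetin's bounds by combining the Faber polynomial machinery with Fourier analysis on $\partial\mathbb{D}$, using the fact that for $\Gamma\in C(p+1,\alpha)$ the boundary value of $\Psi'$ on $\partial\mathbb{D}$ is $p$-times continuously differentiable with $p$-th derivative in $\textup{Lip}\,\alpha$, so its Fourier coefficients decay like $O(n^{-(p+\alpha)})$ by the standard Jackson--Bernstein theorem.

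The first step is to recall the Faber polynomial $F_n(z)$ of degree $n$, defined so that $F_n(\Psi(w))=w^n+\sum_{k\ge 1}a_{n,k}w^{-k}$, which has leading coefficient $\gamma^n$. Using the Green--Stokes identity $\int_G f\,\overline{g}\,dA=\frac{1}{2i}\oint_\Gamma F\,\overline{g}\,dz$ with $F'=f$, applied to $f=g=F_{n+1}'$ and then transplanted to the unit circle via $z=\Psi(w)$, the norm $\|F_{n+1}'\|_{L^2(G)}^2$ reduces to an integral on $\partial\mathbb{D}$ that can be evaluated from the Laurent expansion of $F_{n+1}\circ\Psi$, modulo a remainder whose size is governed by Fourier coefficients of $\Psi'$ of index $\ge n$.

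Next, the extremal characterization $\lambda_n^{-2}=\min\{\|Q\|_{L^2(G)}^2:Q\text{ monic, }\deg Q=n\}$, combined with the trial polynomial $Q=F_{n+1}'/((n+1)\gamma^{n+1})$, yields an upper bound for $\lambda_n^{-2}$; the matching lower bound follows by expanding $F_{n+1}'$ in the orthonormal system $\{p_k\}_{k=0}^{n}$ and invoking Parseval. The discrepancy between the two bounds is controlled by the $\ell^2$ tail of the Fourier coefficients of $\Psi'$ beyond index $n$, yielding $\alpha_n=O(n^{-2(p+\alpha)})$ and hence (\ref{eq:suetin1}).

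For (\ref{eq:suetin2}) I would write $p_n$ in the form $p_n(z)=\sqrt{(n+1)/\pi}\,\Phi^n(z)\Phi'(z)\{1+A_n(z)\}$ and estimate $A_n$ by expanding the analytic function $p_n/(\sqrt{(n+1)/\pi}\,\Phi^n\Phi')-1$ in Faber polynomials and transplanting to $\partial\mathbb{D}$, where the tail is a partial Fourier sum of a $\textup{Lip}\,\alpha$ function; the $\log n$ factor in (\ref{eq:suetin2}) is exactly the Lebesgue-constant loss incurred when passing from $L^2$ to $L^\infty$ control via the conjugate-function operator. The main obstacle, I expect, is tracking constants sharply enough through the Faber--Fourier correspondence so that neither the exponent $2(p+\alpha)$ in (\ref{eq:suetin1}) nor $p+\alpha$ in (\ref{eq:suetin2}) is degraded, which essentially amounts to proving sharp Jackson-type approximation rates for Faber series of smooth functions on $\overline{G}$.
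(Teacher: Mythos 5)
First, a point of orientation: the paper does not prove this statement at all --- it is quoted verbatim from Suetin's monograph \cite{Su74} (Theorems 1.1 and 1.2) as background for the main results, so there is no in-paper proof to compare your argument against. Your proposal therefore has to stand on its own, and as written it contains a genuine gap in the proof of (\ref{eq:suetin1}).

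The gap is this: both of your two ``matching'' bounds point in the same direction. Writing $G_n=F_{n+1}'/(n+1)$, the trial-polynomial step gives $\lambda_n^{-2}\le\|G_n/\gamma^{n+1}\|_{L^2(G)}^2$, and the Parseval step (expanding $G_n=\sum_{k=0}^n c_kp_k$ with $c_n=\gamma^{n+1}/\lambda_n$) gives $\|G_n\|_{L^2(G)}^2\ge\gamma^{2(n+1)}/\lambda_n^2$; these are the \emph{same} inequality, and combined with $\|G_n\|^2_{L^2(G)}=\frac{\pi}{n+1}(1-\varepsilon_n)$ (equation (\ref{eq:Gnepsn})) they yield only $\alpha_n\ge\varepsilon_n\ge 0$, i.e.\ a lower bound on $\alpha_n$, not the upper bound (\ref{eq:suetin1}). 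In the notation of the paper, $\alpha_n=\beta_n+\varepsilon_n$ with $\beta_n=\frac{n+1}{\pi}\|G_n-\frac{\gamma^{n+1}}{\lambda_n}p_n\|^2_{L^2(G)}$, and the whole difficulty is to bound $\beta_n$ from above, i.e.\ to show that $G_n$ is \emph{nearly orthogonal} to $\mathbb{P}_{n-1}$; your Fourier-tail control of $\varepsilon_n$ does not touch this term. Suetin closes this gap by an almost-orthogonality analysis of the inner products $\langle G_n,G_m\rangle$ for $m<n$, while the present paper does it (under the weaker hypothesis that $\Gamma$ is quasiconformal and rectifiable, which includes your smooth case) via the quasiconformal-reflection argument of Theorem~\ref{thm:betan}, which yields $\beta_n\le\frac{k^2}{1-k^2}\varepsilon_n$ and hence reduces everything to $\varepsilon_n$ after all. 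If you import that one lemma, your Fourier-coefficient estimate $\varepsilon_n=O(n^{-2(p+\alpha)})$ does finish (\ref{eq:suetin1}), and your sketch for (\ref{eq:suetin2}) --- an $L^2\to L^\infty$ passage costing a Lebesgue-constant factor $\log n$ --- is consistent with how Suetin obtains the uniform estimate, though it too presupposes the missing bound on $\|q_{n-1}\|_{L^2(G)}$.
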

\begin{remark}
The results of Carleman and Suetin given above, in conjunction with Theorem~\ref{thm:alphange}, yield at once estimates for the decay of the coefficients $b_n$, depending on the degree of analyticity or smoothness of $\Gamma$. For example, under the assumptions of Suetin, for any $n\in\mathbb{N}$,
$$
|b_n|\le c_{10}(\Gamma)\,\frac{1}{n^{p+\alpha+\frac{1}{2}}}.
$$
\end{remark}

Strong asymptotics for $\lambda_n$ and $p_n$ were also derived by E.R.\ Johnston in his Ph.D.\ thesis \cite{Jo}. These asymptotics, however, were established under analytic assumptions on certain functions related with the conformal maps $\Phi$ and $\Psi$ (as compared to the geometric assumptions on $\Gamma$, in the theorems of Carleman and Suetin) and they do not provide the order of decay of the associated errors. An account of Johnston's results can be found in \cite{RW}.

Apart from the above, we cite the following representative works about strong asymptotics for complex orthogonal polynomials generated by measures supported on 2-dimensional subsets of $\mathbb{C}$:  (a) Szeg\"{o}'s book \cite[Ch.\ XVI]{Sz}, for orthogonal polynomials with respect to the arclength measure (the so-called Szeg\"{o} polynomials) on analytic Jordan curves; (b) Suetin's paper \cite{Su66b}, for weighted Szeg\"{o} polynomials on smooth Jordan curves; (c) Widom's paper \cite{Wi69} for weighted Szeg\"{o} polynomials on systems of smooth Jordan curves and smooth Jordan arcs; (d) the recent paper \cite{GPSS}, for Bergman polynomials on systems of smooth Jordan domains. This list is by no means complete.
Nevertheless, we haven't been able to trace in the literature a single result establishing strong asymptotics for orthogonal polynomials defined by measures supported on non-smooth domains, curves or arcs. In this connection, we note that the well-known approach that combines the Riemann-Hilbert reformulation of orthogonal polynomials of Fokas, Its and Kitaev \cite{FIK91,FIK92}, with the method of steepest descent, introduced by Deift and Zhou \cite{DZ}, cannot be possibly applied to treat Bergman polynomials. This is so, because this approach produces, invariably, orthogonal polynomials that satisfy a finite-term recurrence relation and this is not the case with the Bergman polynomials, as Theorem~\ref{thm:ftrr} below shows.

By contrast, strong asymptotics for orthogonal polynomials on the real line, is a well-studied subject. From the vast bibliography available, we cite the recent breakthrough paper of Lubinsky~\cite{Lupress}, on universality limits for kernel polynomials.

The paper is organized as follows: In Section~\ref{sec:faber} we study the properties of Faber polynomials and state a number of results that yield immediately the proofs of the three main theorems. The main result of Section~\ref{sec:poly-est} is a sharp estimate that relates the growth of a polynomial in $\Omega$ to its $L^2$-norm in $G$. This estimate is essential for establishing Theorem~\ref{thm:finepn}.
Section~\ref{sec:appl}  contains a variety of applications of Theorems~\ref{thm:finelambdan} and \ref{thm:finepn}. Finally, in Section~\ref{proofs} we present the proofs of various statements in Sections~\ref{sec:faber} and \ref{sec:poly-est}.

\section{Faber polynomials and proofs of the main results}\label{sec:faber}
\setcounter{equation}{0}
The Faber polynomials $\{F_n\}_{n=0}^\infty$ of $G$ are defined as the polynomial part of the expansion of $\Phi^n(z)$, $n=0,1,\ldots$, near infinity. Therefore, from (\ref{eq:Phi}),
\begin{equation}\label{eq:PhinFn}
\Phi^n(z)=F_n(z)-E_n(z),\quad z\in\Omega,
\end{equation}
where
\begin{equation}\label{eq:Fndef}
F_n(z)=\gamma^{n}z^n+\cdots,
\end{equation}
is the Faber polynomial of degree $n$ and
\begin{equation}\label{eq:Endef}
E_n(z)=\frac{c^{(n)}_1}{z}+\frac{c^{(n)}_2}{z^2}+\frac{c^{(n)}_3}{z^3}+\cdots,
\end{equation}
is the singular part of $\Phi^n(z)$. According to the asymptotics established by Carleman, the Bergman polynomial $p_n(z)$ is related to  $\Phi^n(z)\Phi^\prime(z)$. Consequently, we consider the polynomial part of $\Phi^n(z)\Phi^\prime(z)$, rather than that of $\Phi^n(z)$, and we denote the
resulting series by $\{G_n\}_{n=0}^\infty$. $G_n$ is the so-called Faber polynomial of the 2nd kind (of degree $n$) and satisfies
\begin{equation}\label{eq:PhinPhipGn}
\Phi^n(z)\Phi^\prime(z)=G_n(z)-H_n(z),\quad z\in\Omega,
\end{equation}
with
\begin{equation}\label{eq:Gndef}
G_n(z)=\gamma^{n+1}z^n+\cdots,
\end{equation}
and
\begin{equation}\label{eq:Hndef}
H_n(z)=\frac{d^{(n)}_2}{z^2}+\frac{d^{(n)}_3}{z^3}+\frac{d^{(n)}_4}{z^4}+\cdots.
\end{equation}
It follows immediately from (\ref{eq:PhinFn}) and (\ref{eq:PhinPhipGn}) that
\begin{equation}\label{eq:GnFn+1}
G_n(z)=\frac{F_{n+1}^\prime(z)}{n+1}\quad\textup{and}\quad H_n(z)=\frac{E_{n+1}^\prime(z)}{n+1}.
\end{equation}
\begin{remark}\label{rem:PhipPsip}
If $\Gamma$ is rectifiable, then $\Phi^\prime$ belongs to the Smirnov class ${E}^1(\Omega\setminus\{\infty\})$. In addition, both $\Phi^\prime$ and $\Psi^\prime$ have non-tangential limits almost everywhere on $\Gamma$ and $\partial\mathbb{D}$, respectively, and they are integrable with respect to the arclength measure; see e.g., \cite[Ch.~10]{Du70}, \cite[\S 6.3]{Po92} or \cite{KhD82}.
\end{remark}

Assume now that the boundary $\Gamma$ is rectifiable. For such boundary, Cauchy's integral formula yields the following representations for the Faber polynomials and their associated singular parts:
\begin{equation}\label{eq:FnIntRep}
F_n(z)=\frac{1}{2\pi i}\int_\Gamma\frac{\Phi^n(\zeta)}{\zeta-z}\,d\zeta,\quad z\in G,
\end{equation}
\begin{equation}\label{eq:EnIntRep}
E_n(z)=\frac{1}{2\pi i}\int_\Gamma\frac{\Phi^n(\zeta)}{\zeta-z}\,d\zeta,\quad z\in \Omega,
\end{equation}
\begin{equation}\label{eq:GnIntRep}
G_n(z)=\frac{1}{2\pi i}\int_\Gamma\frac{\Phi^n(\zeta)\Phi^\prime(\zeta)}{\zeta-z}\,d\zeta,\quad z\in G,
\end{equation}
and
\begin{equation}\label{eq:HnIntRep}
H_n(z)=\frac{1}{2\pi i}\int_\Gamma\frac{\Phi^n(\zeta)\Phi^\prime(\zeta)}{\zeta-z}\,d\zeta,\quad z\in \Omega.
\end{equation}

Next, we single out three identities, valid for every $m,n\in\mathbb{N}$, which we are going to use in the sequel:
\begin{equation}\label{eq:PhiEmHn}
\int_\Gamma H_m(z)\overline{\Phi^{n+1}}(z)\,dz=
\int_\Gamma \Phi^m(z)\Phi^\prime(z)\overline{E_{n+1}}(z)\,dz=0,
\end{equation}
and
\begin{equation}\label{eq:PhimPhin}
\frac{1}{2\pi i}\int_\Gamma \Phi^m(z)\Phi^\prime(z)\overline{\Phi^{n+1}}(z)\,dz=\delta_{m,n},
\end{equation}
where $\delta_{m,n}$ denotes Kronecker's delta function. These identities can be verified easily by using a standard argument: Replace in the integrals $\Gamma$ by some level line $L_R=\{z:|\Phi(z)|=R\}$, with $R>1$. Next, make the change of variable $w=\Phi(z)$, use the Laurent series expansion (\ref{eq:Psi}) for $\Psi(w)$, apply the residue theorem and, finally, let $R\to 1$. For the first integral in (\ref{eq:PhiEmHn}) use, in addition, the fact that $H_m(z)$ has a double zero at infinity.

With the help of $G_n$ we define what turns out to be a central factor in our investigation:
\begin{equation}\label{eq:qndef}
q_{n-1}:=G_n-\frac{\gamma^{n+1}}{\lambda_n}p_n,\quad n\in\mathbb{N}.
\end{equation}
This is a polynomial of degree at most $n-1$, but it can be identical to zero, as the special case when $G$ is a disk shows.
By noting the relation
\begin{equation}\label{eq:pnPhinPhip}
p_n(z)=\frac{\lambda_n}{\gamma^{n+1}}\,\Phi^n(z)\Phi^\prime(z)
\left\{1+\frac{H_n(z)}{\Phi^n(z)\Phi^\prime(z)}
-\frac{q_{n-1}(z)}{\Phi^n(z)\Phi^\prime(z)}\right\},
\end{equation}
which follows at once from (\ref{eq:PhinPhipGn}) and (\ref{eq:qndef}) and is valid for any $z\in\Omega$ (since $\Phi^\prime(z)\ne 0$), it is not surprising that we formulate our asymptotic results in terms of the  following two sequences of nonnegative numbers:
\begin{equation}\label{eq:betandef}
\beta_n:=\frac{n+1}{\pi}\|q_{n-1}\|^2_{L^2(G)},\quad n\in\mathbb{N},
\end{equation}
and
\begin{equation}\label{eq:epsndef}
\varepsilon_{n}:=\frac{n+1}{\pi}\int_{\Omega}|H_n(z)|^2dA(z),\quad n\in\mathbb{N}.
\end{equation}
(Recall that $H_n(z)=O(1/z^2)$, as $z\to\infty$, hence the integral in (\ref{eq:epsndef}) is finite for every $n$.)

The proof of  Theorems~\ref{thm:finelambdan} and \ref{thm:finepn} amounts, eventually, to establishing the order of decay of the two sequences $\{\beta_n\}_{n\in\mathbb{N}}$ and $\{\varepsilon_n\}_{n\in\mathbb{N}}$.
To this end, the following representation  of $\beta_n$ as a line integral will be helpful:
\begin{equation}\label{eq:betanEn}
\beta_n=\frac{1}{2\pi i}\,\int_\Gamma q_{n-1}(z)\overline{E_{n+1}}(z)\,dz.
\end{equation}
To derive (\ref{eq:betanEn}), we use the orthogonality property of $p_n$, the relations (\ref{eq:PhinFn})--(\ref{eq:GnFn+1}) and Green's formula to conclude, in steps,
\begin{align*}
\|q_{n-1}\|^2_{L^2(G)}
&=\langle q_{n-1},G_n-\frac{\gamma^{n+1}}{\lambda_n}p_n\rangle=\langle q_{n-1},G_n\rangle \\
&=\int_G q_{n-1}(z)\overline{G_{n}}(z)\,dA(z)
=\frac{1}{n+1}\int_Gq_{n-1}(z)\overline{F_{n+1}^\prime}(z)\,dA(z)\\
&=\frac{\pi}{n+1}\,\frac{1}{2\pi i}\int_\Gamma q_{n-1}(z)\overline{F_{n+1}}(z)\,dz.
\end{align*}
Hence,
$$
\frac{n+1}{\pi}\|q_{n-1}\|^2_{L^2(G)}
=\frac{1}{2\pi i}\int_\Gamma q_{n-1}(z)\overline{E_{n+1}}(z)\,dz
+\frac{1}{2\pi i}\int_\Gamma q_{n-1}(z)\overline{\Phi^{n+1}}(z)\,dz
$$
and the result then follows because the last integral vanishes, as it is easily seen by replacing $\overline{\Phi^{n+1}}(z)$ by $1/\Phi^{n+1}(z)$  and applying the residue theorem.

Using the fact that $E_{n+1}$ is analytic in $\Omega$, including $\infty$,  we obtain from (\ref{eq:epsndef}) with the help of Green's formula in the unbounded domain $\Omega$ (hence the minus sign) a line integral representation for $\varepsilon_n$ as well:
\begin{equation}\label{eq:epsnEn}
\varepsilon_{n}=-\frac{1}{2\pi i}\,\int_\Gamma H_n(z)\overline{E_{n+1}}(z)\,dz.
\end{equation}

It turns out that the strong asymptotic error for the leading coefficient $\lambda_n$ is quite simply the sum of $\beta_n$ and $\varepsilon_{n}$. (This, actually, explains the presence of the fractional term in the definition of $\beta_n$ and $\varepsilon_{n}$.)
\begin{lemma}\label{lem:alphabetaeps}
Assume that the boundary $\Gamma$ of $G$ is rectifiable. Then, for any $n\in\mathbb{N}$,
\begin{equation}\label{eq:alphabetaeps}
\frac{n+1}{\pi}\frac{\gamma^{2(n+1)}}{\lambda_n^2}=1-(\beta_n+\varepsilon_{n}).
\end{equation}
\end{lemma}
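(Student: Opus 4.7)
The plan is to express the quantity $\gamma^{2(n+1)}/\lambda_n^2$ via the $L^2(G)$-norm of the Faber polynomial of the second kind $G_n$, and then evaluate that norm by Green's formula on $\Gamma$, reducing everything to boundary integrals whose values are already tabulated in (\ref{eq:PhiEmHn}), (\ref{eq:PhimPhin}) and (\ref{eq:epsnEn}).

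\textbf{Step 1: Pythagoras for $G_n$.} From the definition (\ref{eq:qndef}), $G_n=q_{n-1}+\frac{\gamma^{n+1}}{\lambda_n}p_n$ with $\deg q_{n-1}\le n-1$. Since $p_n\perp\mathcal{P}_{n-1}$ in $L^2(G)$ and $\|p_n\|_{L^2(G)}=1$, taking norms gives
\begin{equation*}
\|G_n\|^2_{L^2(G)}=\|q_{n-1}\|^2_{L^2(G)}+\frac{\gamma^{2(n+1)}}{\lambda_n^2}.
\end{equation*}
Multiplying by $(n+1)/\pi$, the target identity (\ref{eq:alphabetaeps}) reduces to proving
\begin{equation*}
\frac{n+1}{\pi}\,\|G_n\|^2_{L^2(G)}=1-\varepsilon_n.
\end{equation*}

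\textbf{Step 2: Green's formula for $\|G_n\|^2_{L^2(G)}$.} Using $G_n=F_{n+1}'/(n+1)$ from (\ref{eq:GnFn+1}) and the same Green-type identity already invoked by the authors just above the statement (valid for polynomials integrated over rectifiable $\Gamma$), I would write
\begin{equation*}
\|G_n\|^2_{L^2(G)}=\frac{1}{n+1}\int_G G_n(z)\overline{F_{n+1}'(z)}\,dA(z)=\frac{\pi}{n+1}\cdot\frac{1}{2\pi i}\int_\Gamma G_n(z)\overline{F_{n+1}(z)}\,dz.
\end{equation*}

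\textbf{Step 3: Expand the boundary integrand.} On $\Gamma$, (\ref{eq:PhinFn}) and (\ref{eq:PhinPhipGn}) give $F_{n+1}=\Phi^{n+1}+E_{n+1}$ and $G_n=\Phi^n\Phi'+H_n$ (the right-hand sides via non-tangential boundary values, which exist a.e.\ on $\Gamma$ by Remark~\ref{rem:PhipPsip}). Expanding the product produces four terms whose line integrals $\frac{1}{2\pi i}\int_\Gamma(\cdot)\,dz$ are:
\begin{align*}
\Phi^n\Phi'\cdot\overline{\Phi^{n+1}}&\longrightarrow 1 \quad\text{by \eqref{eq:PhimPhin}},\\
\Phi^n\Phi'\cdot\overline{E_{n+1}}&\longrightarrow 0 \quad\text{by \eqref{eq:PhiEmHn}},\\
H_n\cdot\overline{\Phi^{n+1}}&\longrightarrow 0 \quad\text{by \eqref{eq:PhiEmHn}},\\
H_n\cdot\overline{E_{n+1}}&\longrightarrow -\varepsilon_n \quad\text{by \eqref{eq:epsnEn}}.
\end{align*}
Summing gives $\frac{1}{2\pi i}\int_\Gamma G_n\overline{F_{n+1}}\,dz=1-\varepsilon_n$, and consequently $\frac{n+1}{\pi}\|G_n\|^2_{L^2(G)}=1-\varepsilon_n$. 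Combined with Step~1 this yields (\ref{eq:alphabetaeps}).

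\textbf{Anticipated obstacle.} The only nontrivial point is justifying the Green-type reduction of $\int_G G_n\overline{F_{n+1}'}\,dA$ to a boundary integral over the merely rectifiable $\Gamma$. The standard fix, already used by the authors for the companion line-integral representations, is to perform the computation on a smooth level line $L_R=\{|\Phi|=R\}$ with $R>1$ (where Green's formula is classical), and then pass to the limit $R\to 1^{+}$; the limit is legitimate because $\Phi'\in E^1(\Omega\setminus\{\infty\})$ and the polynomials $G_n$, $F_{n+1}$ are continuous up to $\Gamma$. Once this is granted, the algebraic bookkeeping in Steps~1--3 is entirely mechanical.
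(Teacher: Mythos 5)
Your proposal is correct and follows essentially the same route as the paper: Pythagoras for $G_n=q_{n-1}+\frac{\gamma^{n+1}}{\lambda_n}p_n$, Green's formula to reduce $\|G_n\|^2_{L^2(G)}$ to the boundary integral $\frac{\pi}{n+1}\cdot\frac{1}{2\pi i}\int_\Gamma G_n\overline{F_{n+1}}\,dz$, and the identities (\ref{eq:PhiEmHn}), (\ref{eq:PhimPhin}), (\ref{eq:epsnEn}) to evaluate it as $1-\varepsilon_n$, which is exactly the paper's relation (\ref{eq:Gnepsn}). Your Step 3 simply spells out the computation the paper leaves to the reader, and the sign bookkeeping ($F_{n+1}=\Phi^{n+1}+E_{n+1}$, $G_n=\Phi^n\Phi'+H_n$) is right.
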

\begin{proof}
By applying Green's formula to $\|G_n\|_{L^2(G)}^2$ and using (\ref{eq:GnFn+1}) along with (\ref{eq:PhiEmHn}), (\ref{eq:PhimPhin}) and (\ref{eq:epsnEn}), it is not difficult to verify that,
\begin{equation}\label{eq:Gnepsn}
\|G_n\|_{L^2(G)}^2=\frac{\pi}{n+1}\,\left(1-\varepsilon_n\right),\quad n\in\mathbb{N}.
\end{equation}
This relation was established in \cite[p.~66]{Jo} (for slightly different normalized Faber polynomials), and was reported in \cite[p.~297]{RW}. It was also used in \cite[pp.~12--13]{Su74}.

With  (\ref{eq:Gnepsn}) at hand, the result (\ref{eq:alphabetaeps}) emerges from (\ref{eq:betandef}), because
\begin{eqnarray}\label{eq:pytha}
\|G_n\|^2_{L^2(G)}=\|\frac{\gamma^{n+1}}{\lambda_n}\,p_n+q_{n-1}\|^2_{L^2(G)}
       =\frac{\gamma^{2(n+1)}}{\lambda_n^2}+\|q_{n-1}\|^2_{L^2(G)},
\end{eqnarray}
where we made use of Pythagoras' theorem for the norm  $\|\cdot\|_{L^2(G)}$ and of the fact that $p_n$ is the orthonormal polynomial of degree $n$.
\end{proof}

The proof of Theorem~\ref{thm:finelambdan} will be an immediately consequence of Lemma~\ref{lem:alphabetaeps}, and the following two theorems.

\begin{theorem}\label{thm:betan}
Assume that $\Gamma$ is quasiconformal and rectifiable. Then, for any $n\in\mathbb{N}$,
\begin{equation}\label{eq:betan}
0\le\beta_n\le \frac{k^2}{1-k^2}\,\,\varepsilon_{n},
\end{equation}
where $k$ is a reflection factor of $\Gamma$.
\end{theorem}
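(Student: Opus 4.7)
The lower bound $\beta_n\ge 0$ is immediate from the definition $\beta_n=\frac{n+1}{\pi}\|q_{n-1}\|_{L^2(G)}^2$, so everything reduces to proving $\beta_n\le\frac{k^2}{1-k^2}\varepsilon_n$. My plan is to compare the two line-integral representations
$$
\beta_n=\frac{1}{2\pi i}\int_\Gamma q_{n-1}(z)\overline{E_{n+1}(z)}\,dz,\qquad
\varepsilon_n=-\frac{1}{2\pi i}\int_\Gamma H_n(z)\overline{E_{n+1}(z)}\,dz
$$
by converting the first one into an area integral over $G$ using the quasiconformal reflection $y$, then recognizing (after Cauchy--Schwarz and change of variables) an area integral over $\Omega$ that is essentially $\varepsilon_n$.

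The first step is to exploit the fact that the reflection satisfies $y(z)=z$ on $\Gamma$, so $\overline{E_{n+1}(z)}=\overline{E_{n+1}(y(z))}$ at every point of $\Gamma$. This lets me rewrite the integrand of $\beta_n$ as $F(z):=q_{n-1}(z)\,\overline{E_{n+1}(y(z))}$, which is now a well-defined $W^{1,2}$-function on the closure of $G$ (since $y$ is quasiconformal and $q_{n-1}$, $E_{n+1}$ are holomorphic where needed). Applying the complex Green formula $\int_\Gamma F\,dz = 2i\int_G \partial_{\bar z}F\,dA$ and using that $q_{n-1}$ is entire, together with the chain rule $\partial_{\bar z}\overline{E_{n+1}(y(z))}=\overline{E_{n+1}'(y(z))}\,\overline{y_z(z)}$, I obtain
$$
\beta_n=\frac{1}{\pi}\int_G q_{n-1}(z)\,\overline{E_{n+1}'(y(z))}\,\overline{y_z(z)}\,dA(z).
$$

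Now I apply Cauchy--Schwarz, recalling that $\|q_{n-1}\|_{L^2(G)}^2=\pi\beta_n/(n+1)$:
$$
\beta_n^2\le \frac{1}{\pi^2}\cdot\frac{\pi\beta_n}{n+1}\int_G|E_{n+1}'(y(z))|^2\,|y_z(z)|^2\,dA(z).
$$
The characterizing Beltrami inequality $|y_z|\le k|y_{\bar z}|$ for the quasiconformal reflection (property (A2)--(A3)) gives
$|y_z|^2\le \frac{k^2}{1-k^2}\bigl(|y_{\bar z}|^2-|y_z|^2\bigr)=\frac{k^2}{1-k^2}|J_y|$. Inserting this and making the orientation-reversing change of variables $w=y(z)$, which maps $G$ onto $\Omega$ with $|J_y|\,dA(z)=dA(w)$, yields
$$
\int_G|E_{n+1}'(y(z))|^2|y_z|^2\,dA\le \frac{k^2}{1-k^2}\int_\Omega|E_{n+1}'(w)|^2\,dA(w)=\frac{k^2}{1-k^2}\,\pi(n+1)\,\varepsilon_n,
$$
where in the last step I used $H_n=E_{n+1}'/(n+1)$. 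Combining the two displays and cancelling one factor of $\beta_n$ gives exactly $\beta_n\le\frac{k^2}{1-k^2}\varepsilon_n$.

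The principal technical obstacle is the rigorous application of Green's formula on $G$ with a rectifiable (but only Ahlfors-regular, not smooth) boundary and with an integrand $F$ whose non-holomorphic factor comes from composition with a merely $W^{1,2}_{\text{loc}}$ quasiconformal map. This must be justified via approximation (e.g., by replacing $\Gamma$ by the level lines $L_R$ for $R>1$, proving the identity there where everything is smooth, and passing to the limit $R\to 1^+$ using dominated convergence, which requires the boundary behaviour of $E_{n+1}'$ encoded in Remark~\ref{rem:PhipPsip}) and by verifying that the properties (A1)--(A4) of the quasiconformal reflection provide both the almost-everywhere Beltrami bound $|y_z|\le k|y_{\bar z}|$ and the Jacobian change-of-variables formula on $G$.
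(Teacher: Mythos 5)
Your argument is correct and coincides essentially with the paper's own proof: the same replacement of $\overline{E_{n+1}}$ by $\overline{E_{n+1}}\circ y$ on $\Gamma$ via property (A4), the same passage to an area integral over $G$ by Green's formula, and the same Cauchy--Schwarz step combined with the pointwise bound $|y_z|^2\le\frac{k^2}{1-k^2}|J(y)|$ and the change of variables $y(G)=\Omega$ to recognize $\varepsilon_n$. The only cosmetic difference is that you square $\beta_n$ and cancel a factor rather than substituting $\|q_{n-1}\|_{L^2(G)}^2=\pi\beta_n/(n+1)$ at the end.
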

Note that $\beta_n$, $\varepsilon_n$ and $k$ vanish simultaneously if $\Gamma$ is a circle.

The next theorem is established for $\Gamma$ piecewise analytic without cusps. This means that $\Gamma$ consists of a finite number of analytic arcs, say $N$, that meet at corner points $z_j$, $j=1,\ldots,N$, where they form exterior angles $\omega_j\pi$, with $0<\omega_j<2$.
\begin{theorem}\label{thm:epsn}
Assume that $\Gamma$ is piecewise analytic without cusps. Then, for any $n\in\mathbb{N}$,
\begin{equation}\label{eq:epsn}
0\le\varepsilon_{n}\le c_1(\Gamma)\,\frac{1}{n},
\end{equation}
where $c_1(\Gamma)$ depends on $\Gamma$ only.
\end{theorem}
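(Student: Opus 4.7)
The plan is to pull $\varepsilon_n$ back to the unit disk via the conformal map $\Psi$, recast it as a Dirichlet energy, and exploit the piecewise-analytic structure of $\Gamma$ to obtain the required uniform bound. Specifically, setting $\tilde E_{n+1}(w):=E_{n+1}(\Psi(w)) = F_{n+1}(\Psi(w)) - w^{n+1}$, which is analytic in $\Delta$ with $\tilde E_{n+1}(\infty)=0$, the identities $\Phi\circ\Psi=\mathrm{id}$ and $H_n = E_{n+1}'/(n+1)$ yield $H_n(\Psi(w))\,\Psi'(w) = \tilde E_{n+1}'(w)/(n+1)$. The change of variable $z=\Psi(w)$ in \eqref{eq:epsndef} then gives
\begin{equation*}
\varepsilon_n = \frac{1}{\pi(n+1)}\int_\Delta |\tilde E_{n+1}'(w)|^2\,dA(w),
\end{equation*}
which, by Parseval on the Laurent expansion $\tilde E_{n+1}(w)=\sum_{j\ge 1}a_j^{(n+1)}w^{-j}$, becomes $\varepsilon_n = \frac{1}{n+1}\sum_{j\ge 1}j\,|a_j^{(n+1)}|^2$. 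Thus the target bound $\varepsilon_n\le c_1(\Gamma)/n$ is equivalent to the uniform-in-$n$ Dirichlet estimate $\sum_{j\ge 1}j\,|a_j^{(n+1)}|^2\le C(\Gamma)$.

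Next I would identify these coefficients via the Grunsky coefficients $b_{m,j}$ of $\Psi$. Differentiating in $\tau$ the generating function $\log[(\Psi(\tau)-\Psi(w))/(\tau-w)]=\log b - \sum_{m,j\ge 1}b_{m,j}\,\tau^{-m}w^{-j}$ and inserting the resulting Faber-kernel expansion into the integral representation of $E_{n+1}$ yields $a_j^{(n+1)}=(n+1)\,b_{n+1,j}$, so
\begin{equation*}
\varepsilon_n = (n+1)\sum_{j\ge 1}j\,|b_{n+1,j}|^2.
\end{equation*}
The Grunsky inequality — applicable because a piecewise analytic curve without cusps is quasiconformal — provides only the row-wise bound $\sum_j j|b_{n+1,j}|^2 \le C/(n+1)$, and hence the uniform (but not $n$-decaying) estimate $\varepsilon_n \le C$. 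A further factor of $1/n$ must therefore be extracted from the corner geometry.

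The plan for the corner analysis is as follows. On each of the finitely many open analytic arcs of $\partial\mathbb{D}$ lying between consecutive corner preimages $\tau_j:=\Phi(z_j)$, the map $\Psi$ extends analytically across $\partial\mathbb{D}$; this means that the portion of the defining contour of $\tilde E_{n+1}$ lying over such arcs can be pushed slightly inside $\Delta$, producing geometric decay in $n$, just as in Carleman's classical analytic case. Near each $\tau_j$, the local expansion $\Psi(w)-z_j\sim c_j(w-\tau_j)^{\omega_j}$, with $\omega_j\in(0,2)$, governs the behavior; the contribution from a small neighborhood of $\tau_j$ would be estimated by a Watson-lemma-type asymptotic analysis tuned to the exponent $\omega_j-1 > -1$ of the singularity of $\Psi'$ at $\tau_j$. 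The technical heart — and the main obstacle — is producing exactly $O(1/n)$ decay (without logarithmic loss) from each corner contribution and then summing over the finitely many corners, of possibly different angles. The no-cusp assumption $\omega_j\neq 0,2$ is essential and sharp here: it is equivalent to $|\Psi'|^p$ being locally integrable at each $\tau_j$ for some $p>1$, which is the minimal regularity required to extract the $O(1/n)$ rate; any cusp would yield a strictly slower decay.
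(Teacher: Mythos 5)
Your reduction is correct as far as it goes: the conformal invariance of the Dirichlet integral gives $\varepsilon_n=\frac{1}{\pi(n+1)}\int_\Delta|\tilde E_{n+1}'(w)|^2\,dA(w)=(n+1)\sum_{j\ge1}j\,|b_{n+1,j}|^2$, and you are right that the (strengthened) Grunsky inequality yields only $\varepsilon_n\le k^2=O(1)$, so that the claim is equivalent to the Grunsky-row decay $\sum_j j\,|b_{n+1,j}|^2\le C(\Gamma)/n^2$. But at exactly this point the proposal stops being a proof: the entire content of the theorem is the extra factor $1/n$, and your treatment of it is a one-sentence plan (``a Watson-lemma-type asymptotic analysis tuned to the exponent'') followed by an explicit admission that producing $O(1/n)$ without logarithmic loss is ``the main obstacle.'' That obstacle is the theorem. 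Nothing in the proposal addresses it.

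To see concretely why the sketched corner analysis does not close the gap: a Watson-lemma estimate of the corner contribution to the contour integral gives precisely the pointwise bound $|H_n(z)|\le c\,(n\,\dist(z,\Gamma))^{-1}$ (this is the paper's Theorem~\ref{thm:HnOmgae}), but squaring and integrating this over $\Omega$ diverges, since $\dist(z,\Gamma)^{-2}$ is not integrable near $\Gamma$. So the $L^2$ bound cannot be obtained from any pointwise estimate of this type; one needs a genuinely two-variable analysis. The paper does this by splitting $\Omega$ into $n$-dependent corner neighborhoods $\Omega_1^j=\{|z-z_j|<\delta_j\}$ with the carefully tuned radii $\delta_j=c\,n^{-\omega_j}$ and a complementary region, estimating $|H_n|^2$ on the former and $|E_{n+1}'|^2$ on the latter, and reducing the corner terms to the double integral of Lemma~\ref{lem:final}, whose exponent $1/\omega_j-2$ can be $\le-1$ and therefore forces the inner integral to be truncated at $r$ and a four-case analysis in $\omega$. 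None of this structure is visible in your plan, and the remark following \eqref{eq:cannotdobetter} in the paper shows the choice of $\delta_j$ is not optional. Your closing heuristic --- that the no-cusp condition is ``equivalent to $|\Psi'|^p$ being locally integrable for some $p>1$, which is the minimal regularity required'' --- is also unsubstantiated and somewhat misleading: the paper stresses that the rate $O(1/n)$ is \emph{independent} of the angles $\omega_j$, a universality that comes from cancellation in the representation of $H_n$ rather than from an integrability threshold. The framework you set up (working in the $w$-plane via Grunsky coefficients rather than in the $z$-plane via the reflected contour $\Gamma'$ and Lehman's expansions) is a legitimate alternative starting point, but as written the proof of the decisive estimate is missing.
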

It is interesting to note the universality aspect in the estimate (\ref{eq:epsn}), in the sense that the geometry of $\Gamma$, as it is measured by the values of the angles $\omega_j\pi$, does not influence the way that $\varepsilon_{n}$ tends to zero. This is certainly out of the ordinary in approximation theoretical results involving domains with corners, and it can be contributed to the fact that the effect of $\omega_j$'s \lq\lq cancels out" in the representation (\ref{eq:HnIntRep}) of $H_n(z)$; cf., for instance, equation (\ref{eq:Iji}) below.

\begin{proof}[Proof of Theorem~\ref{thm:finelambdan}.]
The assumption of the theorem implies that $\Gamma$ is quasiconformal and rectifiable. Hence, as it was noted above, the result (\ref{eqinthm:finelambdanii}) follows by combining Lemma~\ref{lem:alphabetaeps} with Theorems~\ref{thm:betan} and \ref{thm:epsn}.
\end{proof}

The proofs of Theorems~\ref{thm:betan} and \ref{thm:epsn} are given in Sections \ref{sub:QuasiRef} and \ref{sec:piece-anal}, respectively. Here we single out a simple consequence of the two estimates (\ref{eq:betan}) and (\ref{eq:epsn}):
\begin{corollary}\label{cor:qn-1L2}
Assume that $\Gamma$ is piecewise analytic without cusps. Then, for any $n\in\mathbb{N}$,
\begin{equation}\label{eq:qn-1L2}
\|q_{n-1}\|_{L^2(G)}\le c_2(\Gamma)\,\frac{1}{n},
\end{equation}
where $c_2(\Gamma)$ depends on $\Gamma$ only.
\end{corollary}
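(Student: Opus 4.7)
The plan is to read this corollary as a direct bookkeeping consequence of Theorems~\ref{thm:betan} and \ref{thm:epsn}, via the definition of $\beta_n$ in (\ref{eq:betandef}). There is no new analysis to do; the task is just to chain together inequalities with the right powers of $n$.

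First I would recall that the hypothesis ``$\Gamma$ piecewise analytic without cusps'' implies (as noted in the paper just before Theorem~\ref{thm:alphange}) that $\Gamma$ is quasiconformal, and since piecewise analytic curves are obviously rectifiable, the hypotheses of Theorem~\ref{thm:betan} are satisfied. Therefore $\beta_n \le \frac{k^2}{1-k^2}\,\varepsilon_n$ with $k=k(\Gamma)<1$. Simultaneously, Theorem~\ref{thm:epsn} gives $\varepsilon_n \le c_1(\Gamma)/n$. Combining these two bounds yields
\begin{equation*}
\beta_n \le \frac{k^2}{1-k^2}\,c_1(\Gamma)\,\frac{1}{n}.
\end{equation*}

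Next I would unwind the definition $\beta_n = \frac{n+1}{\pi}\|q_{n-1}\|_{L^2(G)}^2$ from (\ref{eq:betandef}). Solving for the norm gives
\begin{equation*}
\|q_{n-1}\|_{L^2(G)}^2 = \frac{\pi}{n+1}\,\beta_n \le \frac{\pi\, k^2\, c_1(\Gamma)}{(1-k^2)}\,\frac{1}{n(n+1)} \le \frac{C(\Gamma)}{n^2},
\end{equation*}
and taking square roots yields exactly the desired estimate (\ref{eq:qn-1L2}) with $c_2(\Gamma):=\sqrt{C(\Gamma)}$.

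There is no real obstacle here — this corollary is essentially an algebraic rearrangement of the two previously stated theorems, designed to package the size of the correction polynomial $q_{n-1}$ (defined in (\ref{eq:qndef})) in the form most convenient for later use when estimating the error $A_n(z)$ in Theorem~\ref{thm:finepn} via the representation (\ref{eq:pnPhinPhip}). The only point worth noting explicitly in the writeup is the verification that the hypothesis ``piecewise analytic without cusps'' simultaneously triggers both Theorem~\ref{thm:betan} (through quasiconformality plus rectifiability) and Theorem~\ref{thm:epsn} (directly).
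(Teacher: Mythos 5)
Your proposal is correct and matches the paper's intended argument exactly: the corollary is stated there as a direct consequence of (\ref{eq:betan}) and (\ref{eq:epsn}), combined with the definition (\ref{eq:betandef}) of $\beta_n$, precisely as you chain them. Your explicit check that ``piecewise analytic without cusps'' yields both quasiconformality and rectifiability (so that Theorem~\ref{thm:betan} applies) is the only nontrivial point, and you handle it correctly.
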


The relation (\ref{eq:pnPhinPhip}) shows that in order to derive the strong asymptotics for $p_n(z)$ in $\Omega$, we need suitable estimates for $q_{n-1}(z)$ and $H_n(z)$, $z\in\Omega$. For $q_{n-1}(z)$ this is provided by Lemma~\ref{lem:PolyLemma} below. Regarding $H_n(z)$, it follows easily from the representation (\ref{eq:HnIntRep}) and Remark~\ref{rem:PhipPsip} that, for $\Gamma$ rectifiable,
$$
|H_n(z)|\le\frac{c_3(\Gamma)}{\dist(z,\Gamma)},\quad z\in\Omega.
$$
This is sufficient to yield $A_n(z)=O(1/n)$, for $z$ in $\Omega$. However, for the more explicit estimate (\ref{eqinthm:finepnii1}) we need the following theorem, whose proof is given in Section~\ref{sec:piece-anal}.
\begin{theorem}\label{thm:HnOmgae}
Assume that $\Gamma$ is piecewise analytic without cusps. Then, for any $n\in\mathbb{N}$,
\begin{equation}\label{eq:HnOmega}
|H_n(z)|\le\frac{c_4(\Gamma)}{\dist(z,\Gamma)}\,\frac{1}{n},\quad z\in\Omega,
\end{equation}
where $c_4(\Gamma)$ depends on $\Gamma$ only.
\end{theorem}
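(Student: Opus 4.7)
The plan is to combine the identity $H_n = E'_{n+1}/(n+1)$ from (\ref{eq:GnFn+1}) with the Cauchy representation (\ref{eq:EnIntRep}) for $E_{n+1}$ in order to expose the factor $1/(n+1)$ explicitly, and then to reduce everything to an Ahlfors-type estimate on $\Gamma$. Since $\Gamma$ is piecewise analytic, and hence rectifiable, we may differentiate (\ref{eq:EnIntRep}) under the integral sign in the $z$-variable; the kernel $\zeta \mapsto 1/(\zeta-z)^2$ is continuous in $\zeta\in\Gamma$ for each fixed $z\in\Omega$, so this is immediately justified. Using (\ref{eq:GnFn+1}) this yields
$$
H_n(z)=\frac{E'_{n+1}(z)}{n+1}=\frac{1}{2\pi i(n+1)}\int_\Gamma\frac{\Phi^{n+1}(\zeta)}{(\zeta-z)^2}\,d\zeta,\qquad z\in\Omega.
$$

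Since $|\Phi(\zeta)|=1$ on $\Gamma$, bringing absolute values inside gives
$$
|H_n(z)|\le\frac{1}{2\pi(n+1)}\int_\Gamma\frac{|d\zeta|}{|\zeta-z|^2},\qquad z\in\Omega,
$$
so the theorem reduces to the purely geometric bound $\int_\Gamma |d\zeta|/|\zeta-z|^2\le c(\Gamma)/\dist(z,\Gamma)$ for $z\in\Omega$. Note that nothing is used in this argument about the behavior of $\Phi'$ near the corners, which is consistent with the universality observation following Theorem~\ref{thm:epsn}: the $1/n$ decay does not feel the corner angles $\omega_j\pi$.

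To prove the geometric bound, I would invoke the Ahlfors-regularity of $\Gamma$. Since $\Gamma$ is piecewise analytic without cusps, each of its finitely many analytic sub-arcs has linear-in-radius length near any of its points, and at each corner two analytic arcs meet at an angle strictly between $0$ and $2\pi$; hence there is a constant $M=M(\Gamma)$ with $|\Gamma\cap B(\zeta_0,r)|\le Mr$ for every $\zeta_0\in\Gamma$ and every $r>0$. A standard dyadic splitting then yields the estimate: set $d=\dist(z,\Gamma)$, pick a nearest point $\zeta_0\in\Gamma$, and split $\Gamma$ into $\Gamma\cap B(\zeta_0,2d)$ and its complement. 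The near part contributes at most $2M/d$ since $|\zeta-z|\ge d$ there, while on the far part the triangle inequality gives $|\zeta-z|\ge |\zeta-\zeta_0|/2$, and a dyadic decomposition into annuli $B(\zeta_0,2^{k+1}d)\setminus B(\zeta_0,2^k d)$, $k\ge 1$, combined with Ahlfors-regularity on each produces a geometric series with total mass $O(1/d)$.

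The main technical step is this Ahlfors-regularity verification, which is elementary but does require a corner-by-corner check in order to exhibit $M$ as depending only on $\Gamma$. Everything else in the argument is a one-line consequence of Cauchy's integral formula together with the unimodularity $|\Phi|=1$ on $\Gamma$, and the full proof should be very short.
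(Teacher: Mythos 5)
Your proof is correct, but it takes a genuinely different route from the paper's. The paper keeps the representation (\ref{eq:HnIntRep}) with the kernel $\Phi^n(\zeta)\Phi^\prime(\zeta)/(\zeta-z)$ and extracts the factor $1/n$ by deforming the contour to a reflected curve $\Gamma^\prime\subset\overline{G}$ on which $|\Phi|<1$ away from the corners: Schwarz reflection makes $\Phi$ analytic on $\Gamma^\prime$ off the corners, Lehman's expansions give $|\Phi(\zeta)|\le \textup{e}^{-cs^{1/\omega_j}}$ and $|\Phi^\prime(\zeta)|\le c\,s^{1/\omega_j-1}$ on the segments $l_j^i$, and the $1/n$ then comes from $\int_0^\infty \textup{e}^{-cns^{1/\omega_j}}s^{1/\omega_j-1}\,ds=c\,\omega_j/n$. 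You instead get the $1/n$ for free from the algebraic identity $H_n=E_{n+1}^\prime/(n+1)$ of (\ref{eq:GnFn+1}) together with the differentiated Cauchy formula (\ref{eq:EnIntRep}), and pay for it with a second-order pole, which costs only a constant by upper Ahlfors regularity of $\Gamma$, i.e.\ $\mathcal{H}^1(\Gamma\cap B(\zeta_0,r))\le Mr$; your dyadic argument for $\int_\Gamma|d\zeta|/|\zeta-z|^2\le c/\dist(z,\Gamma)$ is standard and correct, and piecewise analytic curves are indeed upper regular. What your argument buys: it is shorter, needs neither the reflected curve nor Lehman's expansions, and it proves the estimate under weaker hypotheses --- any rectifiable upper-Ahlfors-regular Jordan curve, in particular piecewise analytic curves \emph{with} cusps. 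What it does not buy: the corner machinery (the curve $\Gamma^\prime$, the bounds (\ref{eq:InePhi}), the splitting (\ref{eq:Hnparts})) is reused essentially verbatim in the proof of Theorem~\ref{thm:epsn}, where one needs $\int_\Omega|H_n|^2\,dA=O(1/n^2)$; your pointwise bound $c/(n\dist(z,\Gamma))$ is not square-integrable near $\Gamma$ and cannot replace that finer analysis, so your shortcut isolates this one theorem rather than simplifying the section as a whole. Two small points to tighten: justify differentiation under the integral sign by uniform boundedness of the kernel for $z$ ranging over a small disk in $\Omega$ (continuity in $\zeta$ alone is not the relevant hypothesis), and note that $|\Phi(\zeta)|=1$ for all $\zeta\in\Gamma$ by Carath\'eodory's extension theorem, since $\Gamma$ is a Jordan curve.
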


\begin{proof}[Proof of Theorem~\ref{thm:finepn}.]
From Lemma~\ref{lem:PolyLemma} (with $P\equiv q_{n-1}$) and Corollary~\ref{cor:qn-1L2} we have
\begin{equation}
|q_{n-1}(z)|\le\frac{c_5(\Gamma)}{\dist(z,\Gamma)}\,\frac{1}{\sqrt{n}}\,|\Phi(z)|^{n},
\quad z\in\Omega.
\end{equation}
The result then follows easily from (\ref{eq:pnPhinPhip}) and (\ref{eq:HnOmega}), because Theorem~\ref{thm:finelambdan} implies that
\begin{equation}\label{eq:lam/gam}
\frac{\lambda_n}{\gamma^{n+1}}=\sqrt{\frac{n+1}{\pi}}\left(1+\xi_n\right),\quad n\in\mathbb{N},
\end{equation}
where
$$
0\le\xi_n\le c_6(\Gamma)\,\frac{1}{n}.
$$
\end{proof}

The last result of this section provides a lower bound for $\varepsilon_n$ and yields immediately Theorem~\ref{thm:alphange}. Its own proof is given in Section~\ref{sub:QuasiRef}.
\begin{lemma}\label{lem:epsnge}
Assume that $\Gamma$ is quasiconformal. Then, for any $n\in\mathbb{N}$,
\begin{equation}\label{eq:epsge}
\varepsilon_{n}\ge\,\frac{\pi\,(1-k^2)}{A(G)}\,(n+1)\,|b_{n+1}|^2,
\end{equation}
where $A(G)$ denotes the area of $G$ and $k$ is a reflection factor of $\Gamma$.
\end{lemma}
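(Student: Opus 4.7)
The plan is to bound $\int_\Omega |H_n(z)|^2\,dA(z)$ from below by retaining only the leading Laurent coefficient of $H_n$ at infinity, and then to invoke a sharp area estimate for the conformal map $\Psi$ of a quasiconformal domain in order to convert the resulting factor $1/b^2$ into the desired $\pi(1-k^2)/A(G)$.

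First, I would change variables via $z=\Psi(w)$, $w\in\Delta$, so that $dA(z)=|\Psi'(w)|^2\,dA(w)$ and
$$\int_\Omega |H_n(z)|^2\,dA(z)=\int_\Delta |f(w)|^2\,dA(w),\qquad f(w):=H_n(\Psi(w))\Psi'(w).$$
The function $f$ is analytic in $\Delta\cup\{\infty\}$ and vanishes at $\infty$. To identify its leading coefficient there, I would use the identity $H_n=E_{n+1}'/(n+1)$ from (\ref{eq:GnFn+1}), which yields $d_2^{(n)}=-c_1^{(n+1)}/(n+1)$, combined with the classical Faber identity $c_1^{(n)}=n\,b_n$ (obtained by extracting the $1/z$ coefficient of $\Phi^n(z)=F_n(z)-E_n(z)$ through the substitution $z=\Psi(w)$ in a large contour integral). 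These together give $d_2^{(n)}=-b_{n+1}$, and a short expansion of $f$ near $w=\infty$ then produces
$$f(w)=-\frac{b_{n+1}}{b\,w^2}+O(1/w^3),\qquad w\to\infty.$$

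Next, expanding $f(w)=\sum_{k\geq 2}a_k/w^k$ on $\Delta$ and using the orthogonality of $\{w^{-k}\}_{k\geq 2}$ in $L^2(\Delta)$ with respect to area measure, I would obtain
$$\int_\Delta |f|^2\,dA=\pi\sum_{k\geq 2}\frac{|a_k|^2}{k-1}\,\geq\,\pi\,|a_2|^2=\frac{\pi\,|b_{n+1}|^2}{b^2},$$
hence $\varepsilon_n\geq (n+1)|b_{n+1}|^2/b^2$. To finish, I would combine the classical area theorem for $\Psi$,
$$A(G)=\pi b^2-\pi\sum_{m\geq 1}m|b_m|^2,$$
with the sharpened area inequality $\sum_{m\geq 1}m|b_m|^2\leq k^2 b^2$ that holds when $\Psi$ extends to a $K$-quasiconformal map of $\Cbar$ (the K\"uhnau area inequality, derivable from the quasiconformal reflection satisfying (A1)--(A4)). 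This gives $A(G)\geq \pi(1-k^2)b^2$, so $1/b^2\geq \pi(1-k^2)/A(G)$, and combining with the previous display yields (\ref{eq:epsge}).

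The main obstacle is the quasiconformal area lower bound $A(G)\geq \pi(1-k^2)b^2$: the classical area theorem alone provides only the matching upper bound $A(G)\leq \pi b^2$, and it is precisely the quasiconformality of $\Gamma$ that permits strengthening to a lower bound with deficit controlled by $k^2$. In the framework of this paper, the natural route is to derive this inequality directly from the reflection $y$ via the change-of-variable formula $A(G)=\int_\Omega |J_y|\,dA$ together with the pointwise Jacobian bound $|J_y|\geq (1-k^2)|\partial y|^2$ afforded by (A4).
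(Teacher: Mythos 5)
Your argument is correct, but it reaches (\ref{eq:epsge}) by a genuinely different route from the paper's. The paper works entirely in $\Omega$ with the reflection $y$: it notes $c_1^{(n+1)}=(n+1)b_{n+1}$, hence $H_n(z)=-b_{n+1}/z^2+\cdots$ near infinity, invokes Belyi's representation $b_{n+1}=\frac{1}{\pi}\int_\Omega H_n(z)\,y_{\overline{z}}\,dA(z)$, and finishes in one stroke with Cauchy--Schwarz and the bound $\int_\Omega|y_{\overline{z}}|^2\,dA\le A(G)/(1-k^2)$ from (\ref{eq:intyzJ}). You instead factor the statement into a purely conformal part and a quasiconformal part: the Parseval computation in $L^2(\Delta)$ yields the intermediate inequality $\varepsilon_n\ge(n+1)|b_{n+1}|^2/b^2$, which needs no reflection at all (only that $\Gamma$ has zero area, so the change of variables is exact), and quasiconformality enters solely through the area inequality $A(G)\ge\pi(1-k^2)b^2$, equivalently $\sum_{m\ge1}m|b_m|^2\le k^2b^2$ (K\"uhnau). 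Your decomposition buys a sharper, more transparent intermediate bound and isolates exactly where the constant $\pi(1-k^2)/A(G)$ may be lossy; the paper's route stays self-contained, reusing the machinery (\ref{eq:intyzJ}) it has already set up for Theorem~\ref{thm:betan}, at the price of citing Belyi's coefficient formula. All of your computations that I can check directly (the leading coefficient $a_2=-b_{n+1}/b$, the identity $\int_\Delta|f|^2\,dA=\pi\sum_{k\ge2}|a_k|^2/(k-1)$, and the reduction to $1/b^2\ge\pi(1-k^2)/A(G)$) are right.

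One caveat: your proposed ``direct'' derivation of $A(G)\ge\pi(1-k^2)b^2$ from the reflection is incomplete as sketched. The change of variables and the Jacobian bound give only $A(G)=\int_\Omega|J(y)|\,dA\ge(1-k^2)\int_\Omega|y_{\overline{z}}|^2\,dA$, and you still need the lower bound $\int_\Omega|y_{\overline{z}}|^2\,dA\ge\pi b^2$. That step is not free: it requires its own Green's-formula-plus-Cauchy--Schwarz argument, e.g.\ pairing $y_{\overline{z}}$ against $\Phi'/\Phi^2$, whose $L^2(\Omega)$-norm is $\sqrt{\pi}$ and whose pairing with $y_{\overline{z}}$ produces $\pi b$ by the very Belyi-type identity the paper applies to $H_n$. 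So either cite K\"uhnau's area inequality outright or carry out that extra step; with one of the two in place your proof is complete.
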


\begin{proof}[Proof of Theorem~\ref{thm:alphange}.]
From Lemma~\ref{lem:alphabetaeps} it follows that
$$
\alpha_n=\beta_n+\varepsilon_n.
$$
Thus, (\ref{eq:alphange}) is a straight consequence of Lemma~\ref{lem:epsnge}.
\end{proof}

\section{Polynomial Estimates}\label{sec:poly-est}
\setcounter{equation}{0}
In the proof of Theorem~\ref{thm:finepn}, we required an estimate for the growth of the polynomial $q_{n-1}$ in $\Omega$, in terms of its $L^2$-norm in $G$. This is the purpose of the next lemma, where we use $\mathbb{P}_n$  to denote the space of the polynomials of degree up to $n$.
\begin{lemma}\label{lem:PolyLemma}
Assume that $\Gamma$ is quasiconformal and rectifiable. Then, for any $P\in\mathbb{P}_n$,
\begin{equation}\label{eq:PolyLemma}
|P(z)|\le\frac{1}{\dist(z,\Gamma)\sqrt{1-k^2}}\,\,\sqrt{\frac{n+1}{\pi}}\,
\|P\|_{L^2(G)}\,|\Phi(z)|^{n+1},\quad z\in\Omega,
\end{equation}
where $k$ is a reflection factor of $\Gamma$.
\end{lemma}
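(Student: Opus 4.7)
The plan is to represent $P(z)/\Phi(z)^{n+1}$ as a Cauchy-type area integral over $G$ obtained by pushing the boundary data into $G$ via the quasiconformal reflection $y$, and then to apply Cauchy--Schwarz together with a change of variables.

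\textbf{Step 1: outer Cauchy representation.} Since $P\in\mathbb{P}_n$, the function $P(\zeta)/\Phi(\zeta)^{n+1}$ is analytic in $\Omega$ and vanishes to order one at $\infty$. Hence, for $z\in\Omega$,
\[
\frac{P(z)}{\Phi(z)^{n+1}}=-\frac{1}{2\pi i}\int_\Gamma \frac{P(\zeta)/\Phi(\zeta)^{n+1}}{\zeta-z}\,d\zeta.
\]
On $\Gamma$ we have $|\Phi|=1$, so $1/\Phi^{n+1}$ has unimodular boundary values; the key idea is that this factor admits a \emph{bounded} extension into $\overline{G}$, namely $\zeta\mapsto \Phi(y(\zeta))^{-(n+1)}$, which on $\Gamma$ agrees with $1/\Phi(\zeta)^{n+1}$ (because $y$ fixes $\Gamma$), and which has modulus strictly less than $1$ in $G$ (because $y(G)\subset \overline{\Omega}$).

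\textbf{Step 2: Cauchy--Pompeiu reduction.} Apply the Cauchy--Green formula in $\overline{G}$ to $F(\zeta):=P(\zeta)\,\Phi(y(\zeta))^{-(n+1)}\in C^1(\overline{G})$, noting that $z\notin\overline{G}$:
\[
\frac{1}{2\pi i}\int_\Gamma \frac{F(\zeta)}{\zeta-z}\,d\zeta=\frac{1}{\pi}\int_G \frac{F_{\bar\zeta}(\zeta)}{\zeta-z}\,dA(\zeta).
\]
Since $P$ is entire and $\Phi$ is holomorphic on $y(\overline{G})\subset\overline{\Omega}$, a direct computation gives
\[
F_{\bar\zeta}(\zeta)=-(n+1)\,P(\zeta)\,\frac{\Phi'(y(\zeta))\,y_{\bar\zeta}(\zeta)}{\Phi(y(\zeta))^{n+2}}.
\]
Combining with Step~1 yields the representation
\[
\frac{P(z)}{\Phi(z)^{n+1}}=\frac{n+1}{\pi}\int_G \frac{P(\zeta)\,\Phi'(y(\zeta))\,y_{\bar\zeta}(\zeta)}{\Phi(y(\zeta))^{n+2}\,(\zeta-z)}\,dA(\zeta).
\]

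\textbf{Step 3: Cauchy--Schwarz, distance bound, and change of variables.} By Cauchy--Schwarz and the obvious bound $|\zeta-z|\ge\dist(z,\Gamma)$ for $\zeta\in\overline{G}$ and $z\in\Omega$,
\[
\frac{|P(z)|^2}{|\Phi(z)|^{2(n+1)}}\le \frac{(n+1)^2\,\|P\|_{L^2(G)}^2}{\pi^2\,\dist(z,\Gamma)^2}\int_G \frac{|\Phi'(y(\zeta))|^2\,|y_{\bar\zeta}(\zeta)|^2}{|\Phi(y(\zeta))|^{2(n+2)}}\,dA(\zeta).
\]
The quasiconformal reflection satisfies $|y_\zeta|\le k\,|y_{\bar\zeta}|$; since the Jacobian is $J_y=|y_{\bar\zeta}|^2-|y_\zeta|^2$, this gives $|y_{\bar\zeta}|^2\le J_y/(1-k^2)$. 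Substituting and using $y:G\to\Omega$ as a change of variables ($dA(w)=J_y(\zeta)\,dA(\zeta)$),
\[
\int_G \frac{|\Phi'(y)|^2\,|y_{\bar\zeta}|^2}{|\Phi(y)|^{2(n+2)}}\,dA(\zeta)\le \frac{1}{1-k^2}\int_\Omega \frac{|\Phi'(w)|^2}{|\Phi(w)|^{2(n+2)}}\,dA(w).
\]
Now change variable once more via $u=\Phi(w)$, which maps $\Omega$ conformally onto $\{|u|>1\}$ with $dA(u)=|\Phi'(w)|^2\,dA(w)$:
\[
\int_\Omega \frac{|\Phi'(w)|^2}{|\Phi(w)|^{2(n+2)}}\,dA(w)=\int_{|u|>1}\frac{dA(u)}{|u|^{2(n+2)}}=2\pi\int_1^\infty r^{-2n-3}\,dr=\frac{\pi}{n+1}.
\]
Assembling the estimates yields $|P(z)|^2\le \frac{(n+1)\,\|P\|_{L^2(G)}^2\,|\Phi(z)|^{2(n+1)}}{\pi\,\dist(z,\Gamma)^2(1-k^2)}$, which is precisely \eqref{eq:PolyLemma}.

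\textbf{Expected main obstacle.} The nontrivial design choice is the extension used in Step~2: the naive candidate $\overline{\Phi(y(\zeta))}^{\,n+1}$ also matches $\overline{\Phi}^{\,n+1}$ on $\Gamma$, but produces a divergent final integral $\int_\Omega|\Phi|^{2n}|\Phi'|^2\,dA$. Replacing $\overline{\Phi}^{\,n+1}$ by its equal $\Phi^{-(n+1)}$ on $\Gamma$, and extending via $\Phi(y(\zeta))^{-(n+1)}$, is what makes the two successive changes of variable converge and deliver the exact constant $\pi/(n+1)$. Once that is in place, the quasiconformal distortion bound $|y_{\bar\zeta}|^2\le J_y/(1-k^2)$ supplies the sharp factor $(1-k^2)^{-1/2}$.
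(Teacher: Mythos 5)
Your proposal is correct and follows essentially the same route as the paper: the Cauchy representation of $P/\Phi^{n+1}$ on $\Gamma$, replacement of $1/\Phi^{n+1}$ by its continuous extension $1/(\Phi^{n+1}\circ y)$ via the quasiconformal reflection, Green's (Cauchy--Pompeiu) formula to pass to an area integral over $G$, and then Cauchy--Schwarz combined with the distortion bound $|y_{\bar\zeta}|^2\le J_y/(1-k^2)$ and the two changes of variables $y:G\to\Omega$ and $\Phi:\Omega\to\Delta$, yielding the exact constant $\pi/(n+1)$. The only cosmetic difference is that you assert $F\in C^1(\overline{G})$, whereas the reflection is only guaranteed to have $L^2$-derivatives (it lies in $W^{1,2}_{loc}$ and is $C^1$ off $\Gamma\cup\{a\}$); the paper invokes Green's formula in exactly that weaker Sobolev sense, which is all the computation requires.
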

Regarding sharpness of the inequality (\ref{eq:PolyLemma}), we note that the order $1/2$ of $n$ cannot be improved, as the strong asymptotics of Section~\ref{section:intro} show. Also, the constant term is asymptotically optimal for $z\to\infty$, as the case $P(z)=z^n$, with $G=\mathbb{D}$ (hence $k=0$) shows.

Lemma~\ref{eq:PolyLemma} should be compared with the following well-known result that gives the growth of a polynomial in terms of its uniform norm on $\overline{G}$, denoted here by $\|P\|_{\overline{G}}$.
\begin{mylemma}[Bernstein-Walsh]\label{lem:B-W}
For any $P\in\mathbb{P}_n$,
\begin{equation}\label{eq:B-W}
|P(z)|\le\|P\|_{\overline{G}}\,\,|\Phi(z)|^{n},\quad z\in\Omega.
\end{equation}
\end{mylemma}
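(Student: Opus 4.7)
\medskip

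The plan is to apply the maximum principle to the quotient
\[
f(z):=\frac{P(z)}{\Phi(z)^{n}},\qquad z\in\Omega.
\]
First I would check that $f$ is analytic on the closed domain $\Omega\cup\{\infty\}$ in the Riemann sphere. Since $\Phi:\Omega\to\Delta$ is conformal with $|\Phi(z)|>1$ throughout $\Omega$, the function $1/\Phi^{n}$ is analytic and nonvanishing on $\Omega$. At infinity, the expansion \eqref{eq:Phi} gives $\Phi(z)^{n}=\gamma^{n}z^{n}+O(z^{n-1})$, so $P(z)/\Phi(z)^{n}$ tends to the finite limit $a_{n}/\gamma^{n}$, where $a_{n}$ is the leading coefficient of $P$ (this limit is $0$ if $\deg P<n$). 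Hence $f$ extends analytically to $\infty$ and therefore to all of $\overline{\Omega}\setminus\Gamma$, viewed as a domain on the sphere.

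Next I would pass to the boundary. Because $\Gamma$ is a Jordan curve, Carathéodory's theorem ensures that $\Phi$ extends to a homeomorphism $\overline{\Omega}\to\overline{\Delta}$, and in particular $|\Phi(z)|=1$ for every $z\in\Gamma$. Thus $f$ has continuous boundary values on $\Gamma$ satisfying $|f(z)|=|P(z)|\le\|P\|_{\overline{G}}$.

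Finally I would invoke the maximum modulus principle on the bounded-by-$\Gamma$ region of the sphere containing $\infty$: since $f$ is holomorphic on this region, continuous up to $\Gamma$, and bounded by $\|P\|_{\overline{G}}$ on $\Gamma$, it follows that $|f(z)|\le\|P\|_{\overline{G}}$ for all $z\in\Omega$. Rearranging yields
\[
|P(z)|\le\|P\|_{\overline{G}}\,|\Phi(z)|^{n},\quad z\in\Omega,
\]
which is precisely \eqref{eq:B-W}.

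There is no real obstacle here; the only subtlety worth flagging is the analytic continuation of $f$ through $\infty$, which is what allows the maximum principle to be applied directly on the exterior domain without an auxiliary compact exhaustion. If one prefers to avoid the sphere viewpoint, the same argument can be run on the annular region $\{1<|\Phi(z)|<R\}$ for each $R>1$, noting that on the level line $L_{R}$ one has $|f|\le\max_{L_{R}}|P|/R^{n}\to|a_{n}|/\gamma^{n}$ as $R\to\infty$, and then letting $R\to\infty$.
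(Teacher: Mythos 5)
Your argument is correct, and it is necessarily a different route from the paper's, because the paper does not prove the lemma at all: it quotes it as classical and points to \cite[p.~153]{ST}, where the inequality is established in its general potential-theoretic form, $|P(z)|\le\|P\|_{E}\,e^{n\,g_{\Omega}(z,\infty)}$ for an arbitrary compact set $E$ of positive capacity, $g_{\Omega}$ being the Green's function of the complement with pole at infinity. In the present setting $g_{\Omega}(z,\infty)=\log|\Phi(z)|$, so the two statements coincide; this is also what the paper's remark that (\ref{eq:B-W}) holds ``under more general assumption for $\overline{G}$'' alludes to. What your proof buys is elementarity and self-containedness: a single application of the maximum modulus principle to $P/\Phi^{n}$ on the spherical Jordan domain $\Omega\cup\{\infty\}$, with the removable singularity at $\infty$ and Carath\'eodory continuity up to $\Gamma$ correctly checked (and the Jordan-curve hypothesis is exactly what makes Carath\'eodory available). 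What the potential-theoretic version buys is generality: no boundary regularity whatsoever is needed, $\log|P|-n\,g_{\Omega}-\log\|P\|_{E}$ being subharmonic and $\le 0$ quasi-everywhere on $\partial\Omega$. One small caveat on your closing remark: running the argument on the annuli $\{1<|\Phi(z)|<R\}$ and letting $R\to\infty$ only yields $|f(z)|\le\max\left(\|P\|_{\overline{G}},\,|a_{n}|/\gamma^{n}\right)$, so to conclude you still need $|a_{n}|\le\gamma^{n}\|P\|_{\overline{G}}$; ruling out that the maximum sits at the interior point $\infty$ again uses holomorphy of $f$ there, i.e.\ precisely the sphere viewpoint the remark was meant to avoid. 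Your main argument is unaffected by this.
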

We note that the inequality (\ref{eq:B-W}) is valid under more general assumption for $\overline{G}$; cf.\ e.g. \cite[p.~153]{ST}.
We also note the following norm comparison result, which was derived by Suetin in \cite[p.~38]{Su74} under the assumption $\Gamma$ is smooth:
\begin{equation}\label{eq:Suetin-ineq}
\|P\|_{\overline{G}}\le c(\Gamma)\,(n+1)\,\|P\|_{L^2(G)}.
\end{equation}
To stress the importance of Lemma~\ref{lem:PolyLemma} for our work here, we observe that the combination of (\ref{eq:B-W}) with (\ref{eq:Suetin-ineq}) yields an estimate for the growth of $P$ in $\Omega$ which is $O(n)$, rather than $O(\sqrt{n})$, and this is just not adequate for establishing the strong asymptotics for $p_n$, even for $\Gamma$ smooth; see for details the proof of Theorem~\ref{thm:finepn}.

We turn, now,  our attention to the decay of $p_n(z)$ in $G$, for $\Gamma$ piecewise analytic, and present results for $z\in B$, where $B$ is a compact subset of $G$. (Below we use $c_j(\Gamma,B)$ to denote constants that depend only on $\Gamma$ and $B$.) Before that, we note an estimate for the decay of the derivative of the Faber polynomials:
\begin{equation}\label{eq:GaierFn}
|F_{n+1}^\prime(z)|\le{c_1(\Gamma,B)}\,\frac{1}{n^\omega},\quad z\in B,
\end{equation}
where $\omega\pi$ ($0<\omega<2$) is the smallest exterior angle of $\Gamma$; see \cite[p.\ 223]{Ga01}. This, in view of (\ref{eq:GnFn+1}), gives immediately,
\begin{equation}\label{eq:Gnz}
|G_n(z)|\le {c_2(\Gamma,B)}\,\frac{1}{n^{\omega+1}},\quad z\in B.
\end{equation}
This latter inequality leads to an estimate of the decay of $p_n$ on B. Indeed, from \cite[Lem.\ 1, p.\ 4]{Ga} we have,
$$
|q_{n-1}(z)|\le\frac{\|q_{n-1}\|_{L^2(G)}}{\sqrt{\pi}\,{\dist(z,\Gamma)}},\quad z\in G,
$$
and this in conjunction with (\ref{eq:qndef}), (\ref{eq:qn-1L2}), (\ref{eq:lam/gam}) and (\ref{eq:Gnz}) leads to following:
\begin{lemma}\label{lem:pnG}
Assume that $\Gamma$ is piecewise analytic without cusps. Then, for any $n\in\mathbb{N}$,
\begin{equation}\label{eq:pnG}
|p_n(z)|\le{c_3(\Gamma,B)}\,\frac{1}{n^{{1}/{2}}},\quad z\in B.
\end{equation}
\end{lemma}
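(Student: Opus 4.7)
The proof is essentially a bookkeeping exercise: every ingredient needed has already been assembled in the paragraph preceding the statement, so the strategy is simply to combine them carefully.

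Starting from the definition (\ref{eq:qndef}) of $q_{n-1}$, I would write
\[
p_n(z)=\frac{\lambda_n}{\gamma^{n+1}}\bigl(G_n(z)-q_{n-1}(z)\bigr),\qquad z\in\mathbb{C},
\]
and estimate the two terms in the parenthesis separately on the compact set $B\subset G$. For $G_n(z)$ I invoke directly the Faber-polynomial derivative estimate (\ref{eq:GaierFn}) of Gaier, which via (\ref{eq:GnFn+1}) delivers (\ref{eq:Gnz}), namely $|G_n(z)|\le c(\Gamma,B)/n^{\omega+1}$, and in particular $|G_n(z)|\le c(\Gamma,B)/n$ since $\omega>0$.

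For $q_{n-1}(z)$ I use the pointwise-from-$L^2$ estimate quoted from \cite[Lem.~1]{Ga},
\[
|q_{n-1}(z)|\le\frac{\|q_{n-1}\|_{L^2(G)}}{\sqrt{\pi}\,\dist(z,\Gamma)},\qquad z\in G.
\]
Since $B$ is compact in $G$, we have $\dist(z,\Gamma)\ge\dist(B,\Gamma)>0$ for every $z\in B$, so together with Corollary~\ref{cor:qn-1L2} this yields $|q_{n-1}(z)|\le c(\Gamma,B)/n$ on $B$. Combining the two bounds I get
\[
|G_n(z)-q_{n-1}(z)|\le \frac{c(\Gamma,B)}{n},\qquad z\in B.
\]

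Finally, the asymptotic relation (\ref{eq:lam/gam}), which is a consequence of Theorem~\ref{thm:finelambdan} and is already available at this point of the paper, gives $\lambda_n/\gamma^{n+1}=\sqrt{(n+1)/\pi}(1+\xi_n)$ with $\xi_n=O(1/n)$, hence $\lambda_n/\gamma^{n+1}\le c\sqrt{n}$ for all $n\in\mathbb{N}$. Multiplying the displayed bound by this factor produces $|p_n(z)|\le c_3(\Gamma,B)/\sqrt{n}$, as claimed. No genuine obstacle arises; the only small point requiring attention is to observe that $B$ being compact in $G$ keeps $\dist(\,\cdot\,,\Gamma)$ bounded away from zero uniformly on $B$, which is what allows the $1/\dist(z,\Gamma)$ factor in Gaier's pointwise bound to be absorbed into the constant $c_3(\Gamma,B)$.
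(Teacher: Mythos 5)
Your proposal is correct and follows exactly the route the paper intends: the paper's ``proof'' is precisely the paragraph preceding the statement, combining (\ref{eq:qndef}), the pointwise bound from \cite[Lem.~1]{Ga}, Corollary~\ref{cor:qn-1L2}, (\ref{eq:Gnz}) and (\ref{eq:lam/gam}) in the same way you do. Nothing is missing.
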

The result of Lemma~\ref{lem:pnG} should be combined with an estimate derived in \cite[p.~530]{MSS}. That is, for any $n\in\mathbb{N}$,
\begin{equation}\label{eq:pnBGaier}
|p_n(z)|\le\ c_4(\Gamma,B)\,\frac{1}{n^{s}},\quad z\in B,
\end{equation}
where ${s:=\min_{1\le j\le N}\{\omega_j/(2-\omega_j)\}}$. In particular, if $\omega_j<2/3$, for some $j$, then (\ref{eq:pnG}) is better than (\ref{eq:pnBGaier}).

Regarding sharpness of the exponent of $n$ in (\ref{eq:pnG}), we emphasize the following result of \cite[p.\ 531]{MSS}: \textit{If not all interior angles of $\Gamma$ are of the  form $\pi/m$, $m\in\mathbb{N}$, and if we disregard in the definition of $s$ angles of this form, should there exists, then for any $\epsilon >0$, there is a subsequence $\mathcal{N}_\epsilon\subset\mathbb{N}$, such that for any} $n\in\mathcal{N}_\epsilon$,
$$
|p_n(z)|\ge c_5(\Gamma,B)\,\frac{1}{n^{s+1/2+\epsilon}},\quad z\in B.
$$

\section{Applications}\label{sec:appl}
\setcounter{equation}{0}
Strong asymptotics for orthogonal polynomials with respect to measures supported on the real line play a central role in the development of the theory of orthogonal polynomials in $\mathbb{R}$.
This we expect to be the case for Bergman polynomials also. Accordingly, in this section we show how Theorems~\ref{thm:finelambdan} and \ref{thm:finepn} can be used in order to refine: (a) two classical result on the distribution of zeros and the week asymptotics of $\{p_n\}_{n\in\mathbb{N}}$; (b) two recent results on recurrence relations and the algebraicity of solutions of the Dirichlet problem.

\subsection{Zeros of the Bergman polynomials}\label{subsec:zeros}
A well-known result of Fejer asserts that \textit{all the zeros of} $\{p_n\}_{n\in\mathbb{N}}$, \textit{are contained on the convex hull} $\textup{Co}(\overline{G})$ of $\overline{G}$. This was refined by Saff~\cite{Sa90} to the interior of $\textup{Co}(\overline{G})$. To the above it should be added a result of Widom \cite{Wi67} to the effect that, \textit{on any closed subset $B$ of $\Omega\cap\textup{Co}(\overline{G})$ and for any $n\in\mathbb{N}$, the number of zeros of $p_n$ on $B$ is bounded independently of $n$}. This of course, doesn't preclude the possibility that, if $B\neq\emptyset$, $p_n$ has a zero on $B$, for every $n\in\mathbb{N}$. Our result, which is a simple consequence of Theorem~\ref{thm:finepn}, shows that, under an additional assumption on $\partial G$, the zeros of the sequence $\{p_n\}_{n\in\mathbb{N}}$ cannot be accumulated in $\Omega$.
\begin{theorem}\label{thn:zeros}
Assume that $\Gamma$ is piecewise analytic without cusps. Then, for any closed set $B\subset\Omega$, there exists $n_0\in\mathbb{N}$, such that for $n\ge n_0$,
$p_n$ has no zeros on $B$.
\end{theorem}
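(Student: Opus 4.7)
The plan is to read off the conclusion from the strong asymptotic formula (\ref{eqinthm:finepn}) supplied by Theorem~\ref{thm:finepn}. That formula writes
\[
p_n(z)=\sqrt{\tfrac{n+1}{\pi}}\,\Phi^n(z)\Phi'(z)\,\bigl\{1+A_n(z)\bigr\},\qquad z\in\Omega,
\]
and since $|\Phi(z)|>1$ and $\Phi'(z)\neq 0$ throughout $\Omega$, a zero of $p_n$ in $\Omega$ is exactly a zero of the factor $1+A_n(z)$. So the whole proof reduces to showing $\|A_n\|_{L^\infty(B)}\to 0$ as $n\to\infty$, because then $|1+A_n(z)|\ge 1/2$ on $B$ for all $n$ large.

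To obtain the uniform decay of $A_n$ on $B$, I would control the geometric factor $\dist(z,\Gamma)\,|\Phi'(z)|$ that appears in (\ref{eqinthm:finepnii1}) from below on $B$. First, since $B$ is a closed set contained in $\Omega$ and $\Gamma$ is compact, $\dist(B,\Gamma)\ge d_0>0$. Second, $\Phi'$ is continuous and non-vanishing on $\Omega$, and near $\infty$ the expansion (\ref{eq:Phi}) gives $\Phi'(z)=\gamma+O(1/z^2)$, so $|\Phi'(z)|$ tends to the positive constant $\gamma$ as $z\to\infty$. Consequently $|\Phi'|$ admits a strictly positive lower bound on $B$, whether $B$ is bounded or not, and hence there is a constant $\eta(B,\Gamma)>0$ with
\[
\dist(z,\Gamma)\,|\Phi'(z)|\ge \eta(B,\Gamma),\qquad z\in B.
\]
Plugging this into (\ref{eqinthm:finepnii1}) yields
\[
\sup_{z\in B}|A_n(z)|\le\frac{c_2(\Gamma)}{\eta(B,\Gamma)}\,\frac{1}{\sqrt{n}}+c_2(\Gamma)\,\frac{1}{n}\longrightarrow 0.
\]

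Choosing $n_0$ so that the right-hand side is $<1$ for all $n\ge n_0$ forces $1+A_n(z)\neq 0$ on $B$, and therefore $p_n(z)\neq 0$ on $B$, for every $n\ge n_0$. There is really no serious obstacle here; the only point requiring a small amount of care is the case in which $B$ is unbounded, which is why I would make the behavior of $\Phi'$ near $\infty$ explicit before quoting the bound on $A_n$.
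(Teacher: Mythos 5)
Your proposal is correct and follows exactly the route the paper intends: the paper offers no separate argument for Theorem~\ref{thn:zeros} beyond remarking that it is ``a simple consequence of Theorem~\ref{thm:finepn}'', and your reasoning --- that $\Phi^n\Phi'$ never vanishes on $\Omega$, so a zero of $p_n$ there forces $1+A_n(z)=0$, which is impossible once $\sup_B|A_n|<1$ via the lower bound on $\dist(z,\Gamma)\,|\Phi'(z)|$ over the closed set $B$ --- is precisely that consequence, spelled out. Your extra care about unbounded $B$ (using $\Phi'(z)\to\gamma>0$ at infinity) is a sensible completion of the detail the paper leaves implicit.
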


\subsection{Weak asymptotics}\label{subsec:weak}
The next theorem shows how an important result of Stahl and Totik \cite[Thm~3.2.1(ii)]{StTobo}, regarding the $n$-th root behavior of $\{p_n\}_{n\in\mathbb{N}}$ in $\Omega$, can be made more precise, under an additional assumption on the boundary. Its proof follows easily after utilizing Theorem~\ref{thn:zeros} into \cite[Thm III.4.7]{ST}.
\begin{theorem}
Assume that $\Gamma$ is piecewise analytic without cusps. Then,
$$
\lim_{n \to\infty}|p_n(z)|^{1/n}=|\Phi(z)|,\quad
z\in{\Omega}.
$$
\end{theorem}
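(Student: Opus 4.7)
The plan is to follow the hint given in the excerpt: combine Theorem~\ref{thn:zeros} with the classical result \cite[Thm~III.4.7]{ST} in order to promote a weak-type $n$-th root asymptotic into pointwise convergence on $\Omega$.

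First I would invoke the Stahl--Totik $n$-th root asymptotics \cite[Thm~3.2.1(ii)]{StTobo}: setting $u_n(z):=\tfrac{1}{n}\log|p_n(z)|$, one obtains that the subharmonic sequence $\{u_n\}$ converges to the Green function $\grz=\log|\Phi(z)|$ in a weak sense (in $L^1_{\rm loc}$, or in capacity) on $\Omega$. This weak statement does not by itself yield pointwise convergence, because limit points of zeros of $\{p_n\}$ can produce exceptional sets where the limit fails.

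Second, I would use Theorem~\ref{thn:zeros} to rule out exceptional behaviour inside $\Omega$. Fix $z_0\in\Omega$ and a closed disc $B\subset\Omega$ centered at $z_0$. By Theorem~\ref{thn:zeros}, there is an index $n_0$ such that $p_n$ has no zeros on $B$ for every $n\ge n_0$; hence each $u_n$, $n\ge n_0$, is harmonic on a neighborhood of $B$. The Saff--Totik result \cite[Thm~III.4.7]{ST}, which upgrades weak convergence of a subharmonic sequence to locally uniform convergence on any open set on which the functions are harmonic, then yields $u_n\to\grz$ uniformly on $B$. Exponentiating gives
\[
\lim_{n\to\infty}|p_n(z_0)|^{1/n}=|\Phi(z_0)|,
\]
and since $z_0\in\Omega$ was arbitrary the stated pointwise convergence follows (indeed locally uniformly).

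There is essentially no obstacle in this step, since the nontrivial input is the no-accumulation-of-zeros statement Theorem~\ref{thn:zeros}, whose proof in turn rests on the strong asymptotics of Theorem~\ref{thm:finepn}. For completeness I note an even more direct alternative: take $n$-th roots in the representation of Theorem~\ref{thm:finepn}. For fixed $z\in\Omega$ one has $\dist(z,\Gamma)>0$ and $\Phi'(z)\neq 0$, so $A_n(z)\to 0$; the three factors $\bigl(\tfrac{n+1}{\pi}\bigr)^{1/(2n)}$, $|\Phi'(z)|^{1/n}$ and $|1+A_n(z)|^{1/n}$ all tend to $1$, and the conclusion drops out immediately.
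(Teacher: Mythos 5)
Your argument is exactly the paper's: the author's one-line proof is precisely to feed Theorem~\ref{thn:zeros} into \cite[Thm~III.4.7]{ST} so as to upgrade the Stahl--Totik $n$-th root asymptotics to pointwise (indeed locally uniform) convergence on $\Omega$, which is what you do. Your closing remark that the result also drops out directly from Theorem~\ref{thm:finepn} by taking $n$-th roots is correct as well, but the main route you present coincides with the paper's.
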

For an account on the weak asymptotics for Bergman polynomials defined by more general sets we refer to \cite[Prop.~3.1]{GPSS}.

\subsection{Ratio asymptotics}
Here, we derive the ratio asymptotics as a straight consequence of Theorems~\ref{thm:finelambdan} and \ref{thm:finepn}. Thus, we are obliged to assume that $\Gamma$ is piecewise analytic without cusps. However, we believe that the result of the next two corollaries remains valid under much weaker assumptions on $\Gamma$.
\begin{corollary}\label{cor:ratioln}
Assume that $\Gamma$ is piecewise analytic without cusps. Then, for any $n\in\mathbb{N}$,
\begin{equation}\label{eq:ratioln1}
\sqrt{\frac{n+1}{n+2}}\frac{\lambda_{n+1}}{\lambda_n}=\gamma+\varsigma_n,
\end{equation}
where
\begin{equation}\label{eq:ratioln2}
|\varsigma_n|\le c_1(\Gamma)\,\frac{1}{n}.
\end{equation}
\end{corollary}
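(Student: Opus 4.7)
The plan is to derive the corollary as a direct arithmetic consequence of Theorem~\ref{thm:finelambdan}, since that theorem already controls $\lambda_n$ up to an error of order $1/n$. First I would solve (\ref{eqinthm:finelambdan}) for $\lambda_n^2$ to obtain
\begin{equation*}
\lambda_n^2=\frac{n+1}{\pi}\,\frac{\gamma^{2(n+1)}}{1-\alpha_n},
\end{equation*}
which is legitimate because $\lambda_n>0$ forces $\alpha_n<1$. Writing the analogous identity for $\lambda_{n+1}^2$, forming the ratio, and taking square roots gives at once
\begin{equation*}
\sqrt{\frac{n+1}{n+2}}\,\frac{\lambda_{n+1}}{\lambda_n}
=\gamma\,\sqrt{\frac{1-\alpha_n}{1-\alpha_{n+1}}}.
\end{equation*}
Thus the corollary reduces to showing that the square-root factor on the right is $1+O(1/n)$.

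Next I would fix $n_0\in\mathbb{N}$ large enough (depending only on $c_1(\Gamma)$) so that $\alpha_n\le 1/2$ for all $n\ge n_0$; this is possible by (\ref{eqinthm:finelambdanii}). On this range, the elementary inequalities $|1-\sqrt{1-t}|\le t$ for $0\le t\le 1$ and $|1-(1-t)^{-1/2}|\le 2t$ for $0\le t\le 1/2$, combined with $0\le\alpha_n,\alpha_{n+1}\le c_1(\Gamma)/n$, yield
\begin{equation*}
\left|\sqrt{\frac{1-\alpha_n}{1-\alpha_{n+1}}}-1\right|\le \alpha_n+2\alpha_{n+1}\le \frac{3c_1(\Gamma)}{n},
\end{equation*}
and multiplying by $\gamma$ gives the desired bound (\ref{eq:ratioln2}) for $n\ge n_0$.

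For the finitely many values $n<n_0$, the quantity $\varsigma_n$ is a finite real number (since $\lambda_n,\lambda_{n+1}>0$), so it is uniformly bounded by some constant depending only on $\Gamma$; enlarging $c_1(\Gamma)$ if necessary, the estimate (\ref{eq:ratioln2}) extends to every $n\in\mathbb{N}$. I do not foresee a substantial obstacle here: the entire argument is an algebraic manipulation of Theorem~\ref{thm:finelambdan}, and the only point requiring mild care is keeping $1-\alpha_n$ uniformly bounded away from zero, which is handled by the small-$n$ adjustment.
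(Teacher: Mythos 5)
Your argument is correct and is essentially the paper's own route: the paper states Corollary~\ref{cor:ratioln} as a ``straight consequence'' of Theorem~\ref{thm:finelambdan}, and your proposal simply supplies the routine algebra — forming the ratio of the identities (\ref{eqinthm:finelambdan}) for $n$ and $n+1$, and bounding $\sqrt{(1-\alpha_n)/(1-\alpha_{n+1})}-1$ by $O(1/n)$ using (\ref{eqinthm:finelambdanii}). The small-$n$ adjustment is handled correctly, so nothing further is needed.
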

Since $\textup{cap}(\Gamma)=1/\gamma$, (\ref{eq:ratioln1}) provides the means for computing approximations to the capacity of $\Gamma$, by using only the leading coefficients of the Bergman polynomials.

\begin{corollary}\label{cor:ratiopn}
Under the assumptions of Corollary \ref{cor:ratioln}, for any $z\in\Omega$ and sufficiently large $n\in\mathbb{N}$,
\begin{equation}\label{eq:ratiopn1}
\sqrt{\frac{n+1}{n+2}}\frac{p_{n+1}(z)}{p_n(z)}=\Phi(z)\left\{1+B_n(z)\right\},
\end{equation}
where
\begin{equation}\label{eq:ratiopn2}
|B_n(z)|\le \frac{c_2(\Gamma)}{\dist(z,\Gamma)|\Phi^\prime(z)|}\,\frac{1}{\sqrt{n}}
+c_2(\Gamma)\,\frac{1}{n}.
\end{equation}
\end{corollary}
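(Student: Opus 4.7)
\medskip

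\noindent\textbf{Proof proposal for Corollary~\ref{cor:ratiopn}.}
The plan is to deduce the ratio asymptotics directly from the strong asymptotics for $p_n(z)$ provided by Theorem~\ref{thm:finepn}. Since that theorem gives an expression for $p_n(z)$ as $\sqrt{(n+1)/\pi}\,\Phi^n(z)\Phi'(z)\{1+A_n(z)\}$, forming the ratio $p_{n+1}(z)/p_n(z)$ causes the factor $\Phi^n(z)\Phi'(z)$ to telescope down to $\Phi(z)$, and the square-root prefactors combine in precisely the way needed to give (\ref{eq:ratiopn1}). The substantive content of the corollary then reduces to controlling the quotient $\{1+A_{n+1}(z)\}/\{1+A_n(z)\}$.

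More concretely, the first step is to write
\begin{equation*}
\sqrt{\frac{n+1}{n+2}}\,\frac{p_{n+1}(z)}{p_n(z)}
=\Phi(z)\,\frac{1+A_{n+1}(z)}{1+A_n(z)},
\end{equation*}
which is valid at every $z\in\Omega$ where $p_n(z)\neq 0$; by Theorem~\ref{thn:zeros} the latter holds on any compact subset of $\Omega$ for all sufficiently large $n$, and the bound $|A_n(z)|<1$ (to be shown below) rules out zeros of the factor $1+A_n(z)$. The second step is to identify
\begin{equation*}
B_n(z)=\frac{1+A_{n+1}(z)}{1+A_n(z)}-1=\frac{A_{n+1}(z)-A_n(z)}{1+A_n(z)}.
\end{equation*}

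The third step is to observe that, for fixed $z\in\Omega$, the bound (\ref{eqinthm:finepnii1}) implies $A_n(z)\to 0$ as $n\to\infty$, so there exists $n_0(z)\in\mathbb{N}$ such that $|A_n(z)|\le 1/2$ for all $n\ge n_0(z)$. This is precisely what the phrase \lq\lq sufficiently large $n$'' in the statement refers to. For such $n$, the denominator satisfies $|1+A_n(z)|\ge 1/2$, so
\begin{equation*}
|B_n(z)|\le 2\bigl(|A_{n+1}(z)|+|A_n(z)|\bigr).
\end{equation*}
Invoking (\ref{eqinthm:finepnii1}) for both $A_n(z)$ and $A_{n+1}(z)$, and using $1/\sqrt{n+1}\le 1/\sqrt{n}$ and $1/(n+1)\le 1/n$, yields a bound of the claimed form (\ref{eq:ratiopn2}), with a new constant absorbing the factor $4$.

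There is really no hard step here; the only mild subtlety is the dependence of $n_0$ on $z$, which is why the statement is restricted to \lq\lq sufficiently large $n\in\mathbb{N}$'' rather than being uniform. This dependence is unavoidable because the first term in the estimate (\ref{eqinthm:finepnii1}) blows up as $z$ approaches $\Gamma$, so one cannot extract a uniform $n_0$ working for all of $\Omega$ at once.
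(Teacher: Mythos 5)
Your argument is correct and is exactly the route the paper intends: the text offers no separate proof of Corollary~\ref{cor:ratiopn}, stating only that it is a \lq\lq straight consequence\rq\rq\ of Theorems~\ref{thm:finelambdan} and \ref{thm:finepn}, and your telescoping of $\Phi^n(z)\Phi^\prime(z)$ together with the bound $|B_n(z)|\le 2(|A_{n+1}(z)|+|A_n(z)|)$ for $n$ large enough that $|A_n(z)|\le 1/2$ is precisely that deduction. Your closing remark about the $z$-dependence of $n_0$ correctly explains why the statement is phrased for \lq\lq sufficiently large $n$\rq\rq\ only.
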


Corollary~\ref{cor:ratiopn} suggests a simple numerical method for computing approximations to the conformal map $\Phi(z)$, for $z\in\Omega$. This is quite appealing, in the sense that the Bergman polynomials alone suffice to provide approximations to both interior (via the well-known Bergman kernel method) and exterior conformal mapping associated with the same Jordan curve.

\subsection{Finite recurrence relations and Dirichlet problems}
\begin{definition}\label{def:ftrr}
We say that the  polynomials $\{p_n\}_{n=0}^\infty$ satisfy an $(M+1)$-\textit{term recurrence relation}, if for any $n\geq M-1$,
$$
zp_n(z)=a_{n+1,n}p_{n+1}(z) + a_{n, n} p_n(z) + \cdots +
a_{n-M+1, n} p_{n-M+1}(z).
$$
\end{definition}
A direct application of the ratio asymptotics for $\{p_n\}_{n\in\mathbb{N}}$, given by Corollary~\ref{cor:ratiopn}, leads to the next two theorems. These refine, respectively, Theorems~2.2 and 2.1 of \cite{KhSt}, in the sense that they weaken the $C^2$-smoothness assumption on $\Gamma$. For their proof, it is sufficient to note that: (a) the two theorems are equivalent to each other and (b) the reason for assuming that $\Gamma$ is  $C^2$-smooth in Theorem~2.2 of \cite{KhSt} was to ensure the ratio asymptotics of the Bergman polynomials; see \cite[\S 4 Rem.~(i)]{KhSt}.
\begin{theorem}\label{thm:ftrr}
Assume that $\Gamma$ is piecewise analytic without cusps. If the Bergman polynomials $\{p_n\}_{n=0}^\infty$ satisfy an $(M+1)$-term recurrence relation, with some $M\ge 2$, then $M=2$ and $\Gamma$ is an ellipse.
\end{theorem}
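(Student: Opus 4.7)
My plan is to follow the strategy used by Khavinson and Stylianopoulos in \cite{KhSt} for the smooth case, replacing the $C^2$-smoothness assumption by the piecewise analytic (without cusps) assumption, and at the one point where ratio asymptotics of the Bergman polynomials are needed, invoke Corollary~\ref{cor:ratiopn} in place of the corresponding smooth result. In other words, I would first verify that the \emph{only} ingredient in the argument of \cite{KhSt} that required $C^2$ was the ratio asymptotics $p_{n+1}(z)/p_n(z)\to\Phi(z)$, locally uniformly on $\Omega$. Once that point is isolated, Corollary~\ref{cor:ratiopn} immediately supplies the ratio asymptotics under the assumption of Theorem~\ref{thm:ftrr}, and the proof goes through verbatim.

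Concretely, assume the $(M+1)$-term recurrence holds with some $M\ge 2$. Dividing
\[
z\,p_n(z)=\sum_{j=-M+1}^{1} a_{n+j,\,n}\,p_{n+j}(z)
\]
by $p_n(z)$ for $z\in\Omega$ (Theorem~\ref{thn:zeros} guarantees that $p_n(z)\ne 0$ for all sufficiently large $n$ on any compact subset of $\Omega$), one obtains an identity involving the ratios $p_{n+j}(z)/p_n(z)$. Applying Corollary~\ref{cor:ratiopn} iteratively yields $p_{n+j}(z)/p_n(z)=\Phi(z)^{j}(1+o(1))$ as $n\to\infty$, locally uniformly on $\Omega$. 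Combining this with the ratio asymptotics of $\lambda_n$ (Corollary~\ref{cor:ratioln}) shows that, after suitable normalization, the scalar coefficients $a_{n+j,\,n}$ extracted from the recurrence converge to constants $a_j$ along subsequences, and passing to the limit produces an algebraic identity
\[
z=\sum_{j=-M+1}^{1} a_{j}\,\Phi(z)^{j},\qquad z\in\Omega.
\]
Thus $\Psi=\Phi^{-1}$ is a Laurent polynomial in $w$ of the form $\Psi(w)=a_1 w + a_0 + a_{-1}/w+\cdots+a_{-M+1}/w^{M-1}$.

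At this stage the argument becomes purely geometric: a univalent Laurent polynomial $\Psi(w)$ on $\Delta$ maps $\partial\mathbb{D}$ onto a piecewise analytic (in fact analytic) Jordan curve whose only possible non-analyticity would be cusps, and a careful analysis of the normal form of such $\Psi$ (exactly as in \cite{KhSt}) forces $M=2$, with $\Psi(w)=a_1 w+a_0+a_{-1}/w$. This in turn makes $\Gamma$ an ellipse (or a circle, if $a_{-1}=0$, which is subsumed as a degenerate ellipse).

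The main obstacle I foresee is not in any of the steps above in isolation but in verifying cleanly that the recurrence coefficients $a_{n+j,\,n}$ do converge: this requires combining the ratio asymptotics with Theorem~\ref{thn:zeros} (no accumulation of zeros in $\Omega$) to justify dividing by $p_n$ and equating coefficients in the limiting identity. Since Theorem~\ref{thn:zeros} and Corollaries~\ref{cor:ratioln}--\ref{cor:ratiopn} are already in place under our hypotheses, the proof reduces to citing \cite[Thm~2.2]{KhSt} with the ratio asymptotics input replaced by Corollary~\ref{cor:ratiopn}, which is exactly the strategy indicated in remarks~(i) of \cite[\S 4]{KhSt}.
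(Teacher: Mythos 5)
Your proposal is correct and follows essentially the same route as the paper: the paper's proof of Theorem~\ref{thm:ftrr} consists precisely of observing that the only role of the $C^2$-smoothness hypothesis in \cite[Thm~2.2]{KhSt} was to guarantee the ratio asymptotics of the Bergman polynomials, and then substituting Corollary~\ref{cor:ratiopn} for that input. Your expanded account of how the recurrence coefficients converge and how the limiting identity forces $\Psi$ to be a Laurent polynomial with $M=2$ is a faithful unpacking of the argument the paper cites from \cite{KhSt}, not a different method.
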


\begin{theorem}\label{thm:DP}
Let $G$ be a bounded simply-connected domain with Jordan boundary $\Gamma$, which is piecewise analytic without cusps. Assume that there exists a positive integer $M:=M(G)$ with the property that the Dirichlet problem
\begin{equation}\label{eq:DP}
\left\{
\begin{alignedat}{2}
&\Delta u=0\quad &&\text{in} \ \ G, \\
&u=\overline{z}^mz^n\quad&&\text{on}\ \ \Gamma,
\end{alignedat}
\right.
\end{equation}
has a polynomial solution of degree $\le m(M-1)+n$ in $z$ and of degree $\le n(M-1)+m$ in $\overline{z}$, for all positive integers $m$ and $n$. Then $\Gamma$ is an ellipse and $M=2$.
\end{theorem}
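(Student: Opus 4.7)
The plan is to deduce Theorem~\ref{thm:DP} from Theorem~\ref{thm:ftrr} by invoking the equivalence of these two statements established by Khavinson and Stylianopoulos in \cite{KhSt}. The content of that paper is precisely that, whenever the Bergman polynomials of $G$ admit the ratio asymptotics (\ref{eq:ratiopn1})--(\ref{eq:ratiopn2}), the existence of polynomial solutions to the Dirichlet problems (\ref{eq:DP}), with the prescribed degree bounds, is equivalent to the $(M+1)$-term recurrence relation of Definition~\ref{def:ftrr}. In the original paper this ratio asymptotics was available only for $\Gamma\in C^2$, which is the sole reason for the $C^2$-smoothness hypothesis there.

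In more detail, one direction of the equivalence expresses the harmonic extension to $G$ of the polynomial boundary data $\overline{z}^m z^n$ as $P_{m,n}(z)+\overline{Q_{m,n}(z)}$, where $P_{m,n}$ and $Q_{m,n}$ are holomorphic polynomials, and then reads off the matrix entries $\langle zp_\nu,p_\mu\rangle$ of multiplication by $z$ in the Bergman orthonormal basis by expanding $Q_{m,n}$ along that basis. The degree bounds in (\ref{eq:DP}) translate into the vanishing of these matrix entries outside a band of width $M$, which is exactly the $(M+1)$-term recurrence of Definition~\ref{def:ftrr}. The converse direction exploits the ratio asymptotics (\ref{eq:ratiopn1}) to control, via the Cauchy transform, the holomorphic parts of the harmonic extensions and to verify the required polynomial degree bounds.

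In our setting, Corollary~\ref{cor:ratiopn} supplies precisely the ratio asymptotics (\ref{eq:ratiopn1})--(\ref{eq:ratiopn2}) for every Jordan curve $\Gamma$ that is piecewise analytic without cusps. Consequently, the argument of \cite{KhSt} applies verbatim: the hypothesis of Theorem~\ref{thm:DP} implies the $(M+1)$-term recurrence hypothesis of Theorem~\ref{thm:ftrr}. Theorem~\ref{thm:ftrr} then forces $M=2$ and $\Gamma$ an ellipse, which is the conclusion sought.

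The main obstacle has already been removed by Corollary~\ref{cor:ratiopn}, which is the downstream output of the strong asymptotics developed in Section~\ref{sec:faber} and the polynomial estimate of Section~\ref{sec:poly-est}. Once the ratio asymptotics is in hand, the remainder is a direct invocation of the equivalence of \cite{KhSt}, and no fresh analytic input is required to complete the proof of Theorem~\ref{thm:DP}.
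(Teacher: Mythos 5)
Your proposal is correct and follows essentially the same route as the paper: the author likewise proves Theorem~\ref{thm:DP} by noting that it is equivalent to Theorem~\ref{thm:ftrr} (the equivalence between polynomial solvability of the Dirichlet problems and the finite-term recurrence, first established in \cite{PuSt} and used in \cite{KhSt}), and that the only role of the $C^2$-smoothness hypothesis in \cite{KhSt} was to guarantee the ratio asymptotics now supplied by Corollary~\ref{cor:ratiopn}. The only minor quibble is one of attribution: the equivalence itself is credited to \cite{PuSt} rather than \cite{KhSt}.
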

Theorem~\ref{thm:DP} confirms a special case of the so-called Khavinson and Shapiro conjecture; see \cite{KL} for results reporting on the recent progress in this direction. We note that the equivalence between
the two properties \lq\lq the Bergman polynomials of $G$ satisfy a finite-term recurrence relation" and \lq\lq any Dirichlet problem in $G$, with polynomial data, possesses a polynomial solution" was first established in \cite{PuSt}.


\section{Proofs}\label{proofs}
\setcounter{equation}{0}
\subsection{Quasiconformal boundary}\label{sub:QuasiRef}
Assume that $\Gamma$ is a quasiconformal curve. Our arguments in this subsection are based on the use of a $K$-quasiconformal reflection $y:\overline{\mathbb{C}}\to\overline{\mathbb{C}}$ defined, for some $K\ge 1$, by $\Gamma$ and a fixed point $a$ in $G$. The existence of such a reflection was established by Ahlfors in \cite{Ah63}. 
Below, we collect together some well-known properties of $y(z)$ which are important for our work and we refer to the two monographs \cite{LV} and \cite[pp.~17--27, 108--109]{ABD}, for a concise account of basic results in quasiconformal mapping theory:

\noindent
\textit{Properties of quasiconformal reflection.}
\begin{enumerate}[\qquad]\itemsep=0pt
\item[(A1)]
$\overline{y}$ is a $K$-quasiconformal mapping;
\item[(A2)]
$y(z)$ is continuously differentiable in $\mathbb{C}\setminus\{\Gamma\cup \{a\}\}$;
\item[(A3)]
$y(G)=\Omega$, $y(\Omega)=G$, $y(a)=\infty$ and $y(\infty)=a$;
\item[(A4)]
$y(z)=z$, for every $z\in\Gamma$ and $y(y(z))=z$, for all $z\in\mathbb{C}$.
\end{enumerate}
From the property (A1) it follows that $y$ is a sense-reversing homeomorphism of $\overline{\mathbb{C}}$ onto $\overline{\mathbb{C}}$, satisfying almost everywhere in $\mathbb{C}$ that
\begin{equation}\label{eq:yzk}
\left|\frac{y_z}{y_{\overline{z}}}\right|=\left|\frac{\overline{y}_{\overline{z}}}{\overline{y}_{{z}}}\right|
\le k=\frac{K-1}{K+1}<1,
\end{equation}
where $y_z$ and $y_{\overline{z}}$ denote the formal (complex) derivatives of $y$ with respect to $z$ and $\overline{z}$ and $k$ is a reflection factor of $\Gamma$ with respect to $y$. Furthermore, it follows that $y(z)$ has $L^2$-derivatives in $\mathbb{C}$, in the sense that $y(z)$ is absolutely continuous on lines in $\mathbb{C}$ and belongs to the Sobolev space $W^{1,2}_{loc}$. Clearly, from (\ref{eq:yzk}), almost everywhere in $\mathbb{C}$,
\begin{equation}\label{eq:yzJ}
|y_{\overline{z}}|^2\le\frac{-1}{1-k^2}\,J(y(z))\quad\textup{and}\quad
|y_z|^2\le\frac{-k^2}{1-k^2}\,J(y(z)),
\end{equation}
where $J(y(z)):=|y_z|^2-|y_{\overline{z}}|^2\,\,(<0)$ is the Jacobian of the transformation $y:\overline{\mathbb{C}}\to\overline{\mathbb{C}}$. These two inequalities yield immediately, in view of (A3), that
\begin{equation}\label{eq:intyzJ}
\int_\Omega|y_{\overline{z}}|^2dA(z)\le\frac{1}{1-k^2}A(G)\quad\textup{and}\quad
\int_\Omega|y_z|^2dA(z)\le\frac{k^2}{1-k^2}A(G).
\end{equation}

\begin{proof}[Proof of Theorem~\ref{thm:betan}]
From the properties of the quasiconformal reflection (A1)--(A4), in conjunction with the expansion (\ref{eq:Endef}), we see that $E_{n+1}(y(z))$ defines a continuous extension of $E_{n+1}(z)$ onto $\overline{G}$, which has $L^2$-derivatives. Since $y(z)=z$, for $z\in \Gamma$, (\ref{eq:betanEn}) can be written as
$$
\beta_n=\frac{1}{2\pi i}\,\int_\Gamma q_{n-1}(z)\,(\overline{E_{n+1}}\circ y)(z)\,dz.
$$
Hence, by using Green's formula we obtain, 
\begin{align*}
\beta_n
&=\frac{1}{\pi}\,\int_G \left[q_{n-1}(z)\,(\overline{E_{n+1}}\circ y)(z)\right]_{\overline{z}}\,dA(z)\\
&=\frac{1}{\pi}\,\int_G q_{n-1}(z)\,\overline{E_{n+1}^\prime}(y(z))\,\overline{y}_{\overline{z}}\,\,dA(z).
\end{align*}
Our task now is to find a suitable upper bound for the area integral above. For this, we use the Cauchy-Schwarz inequality and  (\ref{eq:yzJ}) to obtain
\begin{align*}\label{eq:betanCS}
\beta_n &\le\frac{1}{\pi}\,\|q_{n-1}\|_{L^2(G)}
\left[\frac{-k^2}{1-k^2}\,\int_G \left|{E_{n+1}^\prime}(y(z))\right|^2 J(y(z))\,dA(z)\right]^{1/2}\\
&=\frac{1}{\pi}\,\|q_{n-1}\|_{L^2(G)}
\left[\frac{k^2}{1-k^2}\,\int_\Omega \left|{E_{n+1}^\prime}(\zeta)\right|^2 \,dA(\zeta)\right]^{1/2}\\
&=\frac{n+1}{\pi}\,\|q_{n-1}\|_{L^2(G)}\left[\frac{k^2}{1-k^2}\,\int_\Omega \left|H_{n}(z)\right|^2 \,dA(z)\right]^{1/2},
\end{align*}
where we made use of $y(\Omega)=G$ and (\ref{eq:GnFn+1}). The result (\ref{eq:betan}) then follows from the definition of $\beta_n$ and $\varepsilon_n$ in (\ref{eq:betandef}) and (\ref{eq:epsndef}).
\end{proof}

\begin{proof}[Proof of Lemma~\ref{lem:epsnge}]
The application of the residue theorem to the integral of $\Phi^{n+1}(z)$ over $L_R$, for some sufficiently  large $R$, followed by the change of variable $z=\Psi(w)$, in conjunction with the expansions (\ref{eq:Endef}) and (\ref{eq:Psi}), shows that, for any $n\in\mathbb{N}$,
\begin{equation*}
c_1^{(n+1)}=(n+1)b_{n+1}.
\end{equation*}
This, in view of (\ref{eq:Endef}) and (\ref{eq:GnFn+1}) shows that near infinity,
$$
H_n(z)=-\frac{b_{n+1}}{z^2}+\frac{d_3^{(n+1)}}{z^3}+\cdots,
$$
and the application of \cite[Thm 8, \S 6]{Be77} (see also \cite[Lem.\ 2.4.2]{ABD}) leads to the expression
\begin{equation}\label{eq:bn+1Hn}
b_{n+1}=\frac{1}{\pi}\int_\Omega H_n(z) y_{\overline{z}}\,dA(z).
\end{equation}
Once more, the Cauchy-Schwarz inequality, followed by the first inequality in (\ref{eq:intyzJ}), yields
\begin{align*}
\left|\int_\Omega H_n(z) y_{\overline{z}}\,dA(z)\right|
&\le \left[\int_\Omega|H_n(z)|^2dA(z) \right]^{1/2}\left[\int_\Omega|y_{\overline{z}}|^2dA(z) \right]^{1/2}\\
&\le \left[\int_\Omega|H_n(z)|^2dA(z)\right]^{1/2} \left[\frac{1}{1-k^2}A(G)\right]^{1/2},
\end{align*}
and the required result emerges from (\ref{eq:bn+1Hn}) and the definition of $\varepsilon_n$.
\end{proof}

\begin{proof}[Proof of Lemma~\ref{lem:PolyLemma}]
Let $P\in\mathbb{P}_n$ and fix $z\in\Omega$. Then, the function ${P}/{\Phi^{n+1}}$ is analytic in $\Omega$, continuous on $\Gamma=\partial\Omega$ and vanishes at $\infty$. Hence, from Cauchy's formula and the identity property (A4) of $y(z)$ we have,
\begin{equation*}
\frac{P(z)}{\Phi^{n+1}(z)}
=-\frac{1}{2\pi i}\int_\Gamma\frac{g(\zeta)\,d\zeta}{\Phi^{n+1}(\zeta)}
=-\frac{1}{2\pi i}\int_\Gamma\frac{g(\zeta)\,d\zeta}{(\Phi^{n+1}\circ y)(\zeta)},
\end{equation*}
where $g(\zeta):=P(\zeta)/(\zeta-z)$. Now, the function $1/\Phi^{n+1}\circ y$ is continuous on $\overline{G}$, and has $L^2$-derivatives in $G$. Hence, Green's formula yields
\begin{align}\label{eq:P/Phin}
\frac{P(z)}{\Phi^{n+1}(z)} &= -\frac{1}{\pi}\,\int_G
\left[\frac{g(\zeta)}{(\Phi^{n+1}\circ y)(\zeta)}\right]_{\overline{\zeta}}\,dA(\zeta)\nonumber \\
&= \frac{n+1}{\pi}\,\int_G
g(\zeta)\,\frac{\Phi^\prime(y(\zeta))}{(\Phi^{n+2}\circ y)(\zeta)}\,y_{\overline{\zeta}}\,\,dA(\zeta),
\end{align}
where we made use of the fact that $g$ is analytic on $\overline{G}$. Next, using (\ref{eq:yzJ}) we have:
\begin{align}\label{eq:Phip/Phin}
\int_G\frac{|\Phi^\prime(y(\zeta))|^2\,|y_{\overline{\zeta}}|^2}{|(\Phi^{n+2}\circ y)(\zeta)|^2}\, dA(\zeta)
&\le \frac{-1}{1-k^2}\int_G\frac{|\Phi^\prime(y(\zeta))|^2\,|J(y(\zeta))|^2}{|(\Phi^{n+2}\circ y)(\zeta)|^2}\,
 dA(\zeta)\nonumber \\
&=\frac{1}{1-k^2}\int_\Omega\frac{|\Phi^\prime(t)|^2}{|\Phi^{n+2}(t)|^2}\,dA(t)\nonumber \\
&=\frac{1}{1-k^2}\int_\Delta\frac{dA(w)}{|w^{n+2}|^2}
=\frac{1}{1-k^2}\frac{\pi}{(n+1)}.
\end{align}
Obviously,
\begin{equation*}
\int_G|g(\zeta)|^2dA(\zeta)
\le\frac{\|P\|_{L^2(G)}^2}{(\dist(z,\Gamma))^2},
\end{equation*}
and the result (\ref{eq:PolyLemma}) follows from the application of the Cauchy-Schwarz inequality to the integral in (\ref{eq:P/Phin}).
\end{proof}

\subsection{Piecewise analytic boundary}\label{sec:piece-anal}
We recall our assumption that $\Gamma$ consists of $N$ analytic arcs, that meet at corner points $z_j$, $j=1,\ldots,N$, where they form exterior angles $\omega_j\pi$, with $0<\omega_j<2$.

The basic idea underlying the work in this subsection is simple. Extend, using Schwarz reflection, $\Phi$ across each arc of $\Gamma$ inside $G$, so that this extension is conformal in the exterior of a piecewise analytic Jordan curve $\Gamma^\prime$, which shares with $\Gamma$ the same corners $z_j$ and otherwise lies in $G$. $\Gamma^\prime$ can be chosen so that $\Phi$ is analytic on $\Gamma^\prime$, apart from the corners $z_j$. (Hence, the four representations (\ref{eq:FnIntRep})--(\ref{eq:HnIntRep}) remains valid if $\Gamma$ is deformed to $\Gamma^\prime$.) Next, divide  $\Gamma^\prime$ into two parts: a part $l$ containing arcs emanating from the corners $z_j$, and a part $\tau$, containing the remainder of $\Gamma^\prime$, so that there exists a compact set $B:=B(\Gamma)$ of $G$  which includes $\tau$. When $\zeta\in\tau$, $\Phi(\zeta)^n$ decays geometrically to zero, i.e. $|\Phi(\zeta)|^n=O(\rho^n)$, for some $\rho:=\rho(B)<1,$ and therefore its contribution is negligible, when compared with the contribution of $\Phi(\zeta)^n$, for $\zeta\in l$.
To make things more precise, we assume (as we may) that $l$ is formed by linear segments, and we number those two meeting at $z_j$ by $l_j^i$, $i=1,2$; see Figure~\ref{fig:Gammapr}.
\begin{figure}[h]
\begin{center}
\includegraphics*[scale=0.7]{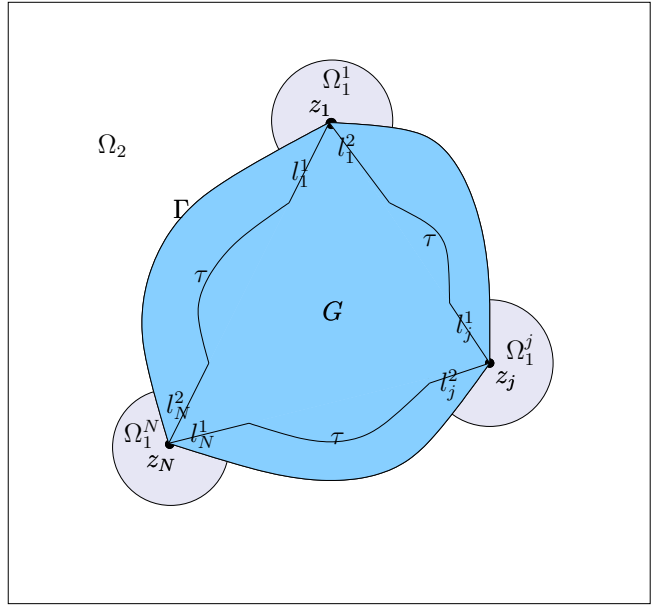}
\caption{The two decompositions: $\Gamma^\prime=l\cup\tau$ and $\Omega=\Omega_1\cup\Omega_2$.}
\label{fig:Gammapr}
\end{center}
\end{figure}

In the sequel, we make extensive use of the following four inequalities:
\begin{remark}\label{rem:Lehman}
For any $\zeta\in l_j^i$ we have:
\begin{enumerate}\itemsep=2pt
\item[\textup{(i)}]
$\displaystyle{|\Phi(\zeta)-\Phi(z_j)|\ge c\,|\zeta-z_j|^{1/\omega_j}}$;
\item[\textup{(ii)}]
$\displaystyle{|\Phi^\prime(\zeta)|\le c\,|\zeta-z_j|^{1/\omega_j-1}}$;
\item[\textup{(iii)}]
$\displaystyle{|\Phi(\zeta)|\le 1-c\,|\zeta-z_j|^{1/\omega_j}}$;
\item[\textup{(iv)}]
$\displaystyle{\dist(\zeta,\Gamma)\ge c\,|\zeta-z_j|}$.
\end{enumerate}
\end{remark}
(Above and in the remainder of the paper, we use the symbol $c$ generically in order to denote positive constants, possibly different ones, that depend on $\Gamma$ only.)

The inequalities (i) and (ii) emerge from Lehman's asymptotic expansions of conformal mappings near an analytic corner \cite{Lehman}. 
The third inequality follows from (i), because reflection preserves angles. Finally, (iv) is a simple fact of conformal mapping geometry.

\begin{proof}[Proof of Theorem~\ref{thm:HnOmgae}]
The proof goes along similar lines as those taken in \cite{Ga01} for establishing an estimate for $F_n(z)$ of the form (\ref{eq:GaierFn}), with one significant difference: Here, $z$ lies in $\Omega$, instead of $G$, and thus can tend to $\Gamma$ without need altering the curve $\Gamma^\prime$. As a consequence, the set $B$ defined above, does not depend on $z$, and thus  $\dist(z,\tau)\ge\dist(z,B)>\dist(\Gamma,B)=c(\Gamma)$.

The details are as follows: From the discussion above we see that, for $z\in\Omega$,
\begin{align}\label{eq:Hnparts}
H_n(z)&=\frac{1}{2\pi i}
\int_{\Gamma^\prime}\frac{\Phi^n(\zeta)\Phi^\prime(\zeta)}{\zeta-z}\,d\zeta\nonumber\\
  &=\frac{1}{2\pi i}\sum_j\int_{l_j^1\cup l_j^2}
    \frac{\Phi^n(\zeta)\Phi^\prime(\zeta)}{\zeta-z}\,d\zeta
     +\frac{1}{2\pi i}\int_{\tau}\frac{\Phi^n(\zeta)\Phi^\prime(\zeta)}{\zeta-z}\,d\zeta \nonumber\\
  &=\frac{1}{2\pi i}\sum_j\int_{l_j^1\cup l_j^2}\frac{\Phi^n(\zeta)\Phi^\prime(\zeta)}{\zeta-z}\,d\zeta
     +O(\rho^n),
\end{align}
for some $\rho<1$, independent of $z$. Hence, we only need to estimate the integral
$$
I_j^i:=\int_{l_j^i}\frac{\Phi^n(\zeta)\Phi^\prime(\zeta)}{\zeta-z}\,d\zeta.
$$

Let $s$ denote the arclength on ${l_j^i}$ measured from $z_j$. Then, Remark~\ref{rem:Lehman} yields the following two inequalities. For any $\zeta\in l_j^i$,
\begin{equation}\label{eq:InePhi}
|\Phi(\zeta)|\le 1-cs^{1/\omega_j}<\exp(-cs^{1/\omega_j})\quad\textup{and}\quad
|\Phi^\prime(\zeta)|\le cs^{1/\omega_j-1}.
\end{equation}
Since $1/\omega_j>1/2$, these imply
\begin{equation}\label{eq:Iji}
|I_j^i|\le\frac{c}{\dist(z,\Gamma)}\int_0^\infty \textup{e}^{-cns^{1/\omega_j}}s^{1/\omega_j-1}\,ds=\frac{c\,\omega_j}{\dist(z,\Gamma)}\,\frac{1}{n},
\end{equation}
and the required estimate follows from (\ref{eq:Hnparts}).
\end{proof}

The next result is needed in establishing Theorem~\ref{thm:epsn}.
\begin{lemma}\label{lem:final}
With $\omega\in(0,2]$ and $k\in\mathbb{N}$, set $\delta:=k^{-\omega}$ and let
\begin{equation}\label{eq:Iome-del-k}
I(\omega,k):=\int_0^{\delta}\left[\int_r^\infty
{\textup{e}^{-ks^{1/\omega}}s^{1/\omega-2}}\,ds \right]^2rdr.
\end{equation}
Then,
\begin{equation}\label{eq:lemfinal}
I(\omega,k)\le\frac{c}{k^2}.
\end{equation}
\end{lemma}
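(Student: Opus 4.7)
The plan is to eliminate the $k$-dependence from the integrand by two successive changes of variable, reducing the estimate to a $k$-independent integral of a single function of $\omega$.

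First, in the inner integral $J(r) := \int_r^\infty e^{-ks^{1/\omega}} s^{1/\omega - 2}\, ds$, I would substitute $u = s^{1/\omega}$ and then $v = ku$. The first substitution transforms the measure and exponent so that $J(r) = \omega \int_{r^{1/\omega}}^\infty e^{-ku} u^{-\omega}\, du$, and the second pulls the $k$ outside the integral to yield
$$
J(r) = \omega\, k^{\omega - 1}\, F(k r^{1/\omega}), \qquad F(t) := \int_t^\infty e^{-v} v^{-\omega}\, dv.
$$

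Second, in the outer integral, I would substitute $t = k r^{1/\omega}$, noting that $r = \delta = k^{-\omega}$ corresponds exactly to $t = 1$ (precisely why the cut-off is defined this way). A routine Jacobian computation then gives
$$
I(\omega, k) = \frac{\omega^3}{k^2} \int_0^1 F(t)^2\, t^{2\omega - 1}\, dt,
$$
so the remaining task is to bound the $k$-independent quantity $\int_0^1 F(t)^2\, t^{2\omega - 1}\, dt$ by a constant depending only on $\omega$.

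Finally, I would handle this by a case analysis based on the behavior of $F(t)$ as $t \to 0^+$, which changes qualitatively at $\omega = 1$. For $\omega \in (0,1)$, $F(t) \le \Gamma(1 - \omega)$ is bounded; for $\omega = 1$, the elementary estimate $F(t) \le 1 - \log t$ on $(0,1]$ gives a finite contribution (via the substitution $u = -\log t$); and for $\omega \in (1, 2]$, splitting the integral defining $F$ at $v = 1$ yields $F(t) \le t^{1-\omega}/(\omega - 1) + e^{-1}$. In each case, the weight $t^{2\omega - 1}$ exactly tames the singularity of $F(t)^2$ at the origin, producing a finite bound. The only mild subtlety is that this bound is not uniform in $\omega$ near $\omega = 1$, but this is harmless since $\omega$ is fixed and the paper's constants are allowed to depend on $\Gamma$ (hence on the corner angles $\omega_j$).
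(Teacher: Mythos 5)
Your argument is correct, and it takes a genuinely different route from the paper. The paper attacks $I(\omega,k)$ directly by a four-way case analysis: for $\omega=1$ it identifies the inner integral with the exponential integral $E_1(kr)$ and invokes $\int_0^\infty E_1^2(t)\,dt=2\log 2$; for $0<\omega<1$ it bounds the inner integral crudely by $\int_0^\infty e^{-ks^{1/\omega}}s^{1/\omega-2}\,ds=\omega\,\Gamma(1-\omega)k^{\omega-1}$ and uses $\delta^2=k^{-2\omega}$; for $\omega=2$ it computes the inner integral in closed form; and for $1<\omega<2$ it interpolates by observing that $h(\omega,s)=e^{-ks^{1/\omega}}s^{1/\omega-2}$ is monotone in $\omega$ with opposite senses on $(0,\delta)\cup(1,\infty)$ and on $(\delta,1)$, so that the two boundary cases dominate. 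Your rescaling $u=s^{1/\omega}$, $v=ku$, $t=kr^{1/\omega}$ instead extracts the exact factor $k^{-2}$ once and for all, reducing the claim to the finiteness of the $k$-free quantity $\int_0^1 F(t)^2\,t^{2\omega-1}\,dt$ with $F(t)=\int_t^\infty e^{-v}v^{-\omega}\,dv$; this makes transparent why $\delta=k^{-\omega}$ is the natural cutoff (it corresponds to $t=1$) and replaces the paper's delicate monotonicity interpolation for $1<\omega<2$ by the elementary bound $F(t)\le t^{1-\omega}/(\omega-1)+e^{-1}$. Your three-case check of convergence at $t=0$ is complete and correct (the weight $t^{2\omega-1}$ indeed absorbs the singularities $\Gamma(1-\omega)$, $1-\log t$, and $t^{1-\omega}/(\omega-1)$ in the respective ranges), and the non-uniformity of the constant near $\omega=1$ is harmless for the same reason it is in the paper: the constants are allowed to depend on $\omega$, and only finitely many angles $\omega_j$ occur in the application.
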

(In the statement and proof of Lemma~\ref{lem:final}, the positive constants $c$ depend on $\omega$ only.)
\begin{proof}
We consider separately the four complementary cases: (I) $\omega=1$; (II) $0<\omega<1$; (III) $\omega=2$; (IV) $1<\omega<2$.

\medskip
\noindent
\textit{Case} (I). Note,
$$
I(1,k)=\int_0^{\delta}\left[\int_r^\infty\frac{\textup{e}^{-ks}}{s}\,ds \right]^2rdr
=\int_0^{\delta}E_1^2(kr)\,rdr,
$$
where $E_1(x)$, denotes the exponential integral $E_1(x):=\int_x^\infty{t^{-1}}{\textup{e}^{-t}}\,dt$,
with $x>0$. Using the formula $\int_0^\infty E_1^2(t)dt=2\log 2$, we thus have
$$
I(1,k)\le\delta\int_0^{\delta}E_1^2(kr)dr\le\frac{\delta}{k}\int_0^\infty E_1^2(t)dt=\frac{c}{k^2}.
$$

\medskip
\noindent
\textit{Case} (II). Now $1/\omega>1$. Consequently, for $r>0$,
$$
\int_r^\infty\textup{e}^{-ks^{1/\omega}}s^{1/\omega-2}\,ds\le
\int_0^\infty\textup{e}^{-ks^{1/\omega}}s^{1/\omega-2}\,ds=\omega\,\Gamma(1-\omega)\,k^{\omega-1},
$$
where $\Gamma(x):=\int_0^\infty t^{x-1}\textup{e}^{-t}dt$ denotes the Gamma function with argument $x>0$.
This yields
\begin{equation}\label{eq:I-CaseII}
I(\omega,k)\le c\,\frac{\delta^2}{k^{2(1-\omega)}}=\frac{c}{k^2}.
\end{equation}

\medskip
\noindent
\textit{Case} (III). We note first the formula, valid for $r>0$,
$$
\int_r^\infty\textup{e}^{-ks^{1/2}}s^{-3/2}\,ds=
2\left({\textup{e}^{-k\,r^{1/2}}}{r^{-1/2}}-k\,E_1(k\,r^{1/2})\right).
$$
Therefore,
$$
I(2,k)< c\,\int_0^\infty\textup{e}^{-2k\,r^{1/2}}dr
+ c\,k^2\int_0^\infty E_1^2(k\,r^{1/2})\,rdr=\frac{c}{k^2}+k^2\frac{c}{k^4}=\frac{c}{k^2}.
$$

\medskip
\noindent
\textit{Case} (IV). The result for $1<\omega<2$ can be established as a special of $\omega=1$ and $\omega=2$. To see this, let $h(\omega,s):={\textup{e}^{-ks^{1/\omega}}s^{1/\omega-2}}$ and split the integral from $r$ to $\infty$ in (\ref{eq:Iome-del-k}) into three parts:
\begin{equation*}
\int_r^\infty h(\omega,s)\,ds
=\int_r^\delta h(\omega,s)\,ds+\int_\delta^1 h(\omega,s)\,ds+\int_1^\infty h(\omega,s)\,ds.
\end{equation*}
Next, observe that if $s\in(0,\delta)\cup(1,\infty)$, then $h(\omega,s)$ is an increasing function of $\omega$,
hence $h(\omega,s)\le h(2,s)$. At the other hand, when $s\in(\delta,1)$, then $h(\omega,s)$ is a decreasing function of $\omega$, thus $h(\omega,s)\le h(1,s)$. These give,
$$
\int_r^\infty h(\omega,s)\,ds
\le\int_r^\infty \textup{e}^{-ks^{1/2}}s^{-3/2}\,ds+\int_r^\infty \frac{\textup{e}^{-ks}}{s}\,ds,
$$
and the result (\ref{eq:lemfinal}) follows easily by means of the estimates given Cases (I) and (III).
\end{proof}

\begin{proof}[Proof of Theorem~\ref{thm:epsn}]
We choose positive quantities
\begin{equation}\label{eq:deltaj}
\delta_j=\delta_{n,j}:=c\,{n^{-\omega_j}},\quad j=1,\ldots,N,
\end{equation}
where $c$ is small enough so that any two of the $N$ domains $\Omega_1^{j}:=\{z\in\Omega:|z-z_j|<\delta_j\}$, are disjoint from each other. Next, we split $\Omega$ into two parts $\Omega_{1}$ and $\Omega_{2}$, where
$\Omega_{1}:=\cup_j\Omega_1^{j}$; see Figure~\ref{fig:Gammapr}.

Using the partition of $\Omega$  into $\Omega_1$ and $\Omega_2$, and in view of (\ref{eq:GnFn+1}), we can express $\varepsilon_{n}$ as the sum
\begin{align}\label{eq:I1+I2}
\varepsilon_{n}&=\frac{n+1}{\pi}\int_{\Omega_{1}}|H_n(z)|^2dA(z)
+\frac{1}{\pi(n+1)}\int_{\Omega_{2}}|E^\prime_{n+1}(z)|^2dA(z)\nonumber\\
&=:J_1(n)+J_2(n).
\end{align}
Our task now is to show that both $J_1(n)$ and $J_2(n)$ are $O(1/n)$.

\noindent
\textit{Estimating $J_1(n)$.} Note that $J_1(n)\le c\,n\sum_jT_j(n)$, where
\begin{equation*}
T_j(n):=\int_{\Omega_{1}^j}|H_n(z)|^2dA(z),\quad j=1,\ldots,N.
\end{equation*}
With $z\in\Omega_1^j$, set $r:=|z-z_j|$ and observe that, in view of Remark~\ref{rem:Lehman} (iv), $|\zeta-z|\sim s+r$, if $\zeta\in l_j^1\cup l_j^2$, while $|\zeta-z|\ge c$, if $\zeta\in l_k^1\cup l_k^2$, with $k\ne j$, where, as above, $s$ denote the arclength on ${l_j^i}$ measured from $z_j$. Consequently, since $\Omega_1^j\subset\{z:|z-z_j|<\delta_j\}$, we have from (\ref{eq:Hnparts}) and (\ref{eq:InePhi}):
\begin{align*}\label{eq:defJ1+J2+J3}
T_j(n)
&\le
c\,\int_0^{\delta_j}\left[\int_0^\infty\frac{\textup{e}^{-cns^{1/\omega_j}}s^{1/\omega_j-1}}{r+s}ds
+\sum_{k\ne j}\int_0^\infty\textup{e}^{-cns^{1/\omega_k}}s^{1/\omega_k-1}\,ds\right]^2rdr \\
&\le
c\,\int_0^{\delta_j}\left[\int_0^\infty\frac{\textup{e}^{-cns^{1/\omega_j}}s^{1/\omega_j-1}}{r+s}ds
\right]^2rdr+\frac{c}{n^2}\int_0^{\delta_j}rdr\\
&\le
c\,\int_0^{\delta_j}\left[\int_0^r\frac{\textup{e}^{-cns^{1/\omega_j}}s^{1/\omega_j-1}}{r}\,ds \right]^2rdr\\
&\quad+c\,\int_0^{\delta_j}\left[\int_r^\infty{\textup{e}^{-cns^{1/\omega_j}}s^{1/\omega_j-2}}\,ds \right]^2rdr+\frac{c}{n^{2(1+\omega_j)}}.
\end{align*}
Now, we use the estimate
$$
\int_0^r\textup{e}^{-cns^{1/\omega_j}}s^{1/\omega_j-1}\,ds=\frac{c}{n}(1-\textup{e}^{-cnr^{1/\omega_j}})
<c\,r^{1/\omega_j},
$$
and employ Lemma~\ref{lem:final} to conclude that $T_j(n)\le c/n^2$,  which yields the required inequality
\begin{equation}\label{eq:I1le1/n}
J_1(n)\le\frac{c}{n}.
\end{equation}

\medskip
\noindent
\textit{Estimating $J_2(n)$.}
By using Cauchy's integral formula for the derivative of $E_{n+1}$ and arguing as in the proof of Theorem~\ref{thm:HnOmgae} we have  for $z\in\Omega$,
\begin{eqnarray}\label{eq:En+1Om2}
E^\prime_{n+1}(z)&=&\frac{1}{2\pi i}
   \int_{\Gamma^\prime}\frac{\Phi^{n+1}(\zeta)}{(\zeta-z)^2}\,d\zeta\nonumber\\
   &=& \frac{1}{2\pi i}\sum_j\int_{l_j^1\cup l_j^2}\frac{\Phi^{n+1}(\zeta)}{(\zeta-z)^2}\,d\zeta
     +O(\rho^n),
\end{eqnarray}
for some $\rho<1$, independent of $z$.

Now, when $z\in\Omega_2$ and $\zeta\in l_j^1\cup l_j^2$, $j=1,\ldots,N$, the triangle inequality and Remark~\ref{rem:Lehman} (iv) imply $|\zeta-z|\ge c\,|z-z_j|$.
Therefore, by using (\ref{eq:InePhi}) and working as above we have,
\begin{eqnarray*}
\left|\int_{l_j^i}\frac{\Phi^{n+1}(\zeta)}{(\zeta-z)^2}\,d\zeta\right|
\le\frac{c}{|z-z_j|^2}\int_0^\infty \textup{e}^{-cns^{1/\omega_j}}ds
=\frac{c\,\Gamma(\omega_j)}{|z-z_j|^2}\,\frac{1}{n^{\omega_j}}.
\end{eqnarray*}
 This, in conjunction with (\ref{eq:En+1Om2}), leads to
$$
\int_{\Omega_2}|E^\prime_{n+1}(z)|^2dA(z)
\le c\,\sum_j\frac{1}{n^{2\omega_j}}\int_{\Omega_2}\frac{dA(z)}{|z-z_j|^4}.
$$
Furthermore, since $\Omega_2\subset\{z:|z-z_j|\ge\delta_j\}$, we have from (\ref{eq:deltaj}),
\begin{align}\label{eq:cannotdobetter}
\int_{\Omega_{2}}|E^\prime_{n+1}(z)|^2dA(z)
&\le c\sum_j\frac{1}{n^{2\omega_j}}\int_{|z-z_j|>\delta_j}\frac{dA(z)}{|z-z_j|^4}\nonumber\\
&=c\sum_j\frac{1}{n^{2\omega_j}\,\delta_j^2}=c,
\end{align}
and this yields the required estimate
\begin{equation}\label{eq:I2le1/n}
I_2(n)\le\frac{c}{n}.
\end{equation}
\end{proof}
We end, by remarking that any choice for $\delta_j$, other that (\ref{eq:deltaj}), will result to weaker estimates for $\varepsilon_n$, as a comparison of (\ref{eq:I-CaseII}) with (\ref{eq:cannotdobetter}) shows.

\def\cprime{$'$}
\providecommand{\bysame}{\leavevmode\hbox to3em{\hrulefill}\thinspace}
\providecommand{\MR}{\relax\ifhmode\unskip\space\fi MR }
\providecommand{\MRhref}[2]{%
  \href{http://www.ams.org/mathscinet-getitem?mr=#1}{#2}
}
\providecommand{\href}[2]{#2}

\end{document}